\documentclass[11pt,a4paper,reqno,english]{amsart}
\title[]{%
  Dynamical Amrein--Berthier Uncertainty for
  Fractional Schr\"odinger Flows
}

\usepackage{%
    amssymb%
    ,babel%
    ,csquotes%
    ,graphicx%
    ,mathrsfs%
    ,eucal%
    ,xcolor%
    ,enumitem%
    ,geometry%
    ,esint}
\usepackage[%
    texencoding=ascii%
    ,backend=biber%
    ,url=false%
    ,doi=false%
    ,isbn=false%
    ,giveninits=true%
    ,maxnames=5%
    ,hyperref]{biblatex}
\usepackage[%
    pdfauthor={Piero D'Ancona}%
    ,colorlinks=true%
    ,linkcolor=magenta%
    ,citecolor=cyan%
    ,urlcolor=blue%
    ,hyperfootnotes=false%
]{hyperref}

\DeclareMathOperator{\spt}{spt}

\newcommand{\bra}[1]{\langle #1 \rangle}
\newcommand{\one}[1]{\mathbf{1}_{#1}}

\numberwithin{equation}{section}
\newtheorem{theorem}{Theorem}[section]
\newtheorem{corollary}[theorem]{Corollary}
\newtheorem{lemma}[theorem]{Lemma}
\newtheorem{proposition}[theorem]{Proposition}
\theoremstyle{remark}
\newtheorem{remark}[theorem]{Remark}

\theoremstyle{definition}

\date{\today}

\author[P.~D'Ancona]{Piero D'Ancona}
\address{Piero D'Ancona:
Dipartimento di Matematica\\ Sapienza Universit\`{a} di Roma\\
Piazzale A.~Moro 2\\ 00185 Roma\\ Italy}
\email{piero.dancona@uniroma1.it}
\author[D.~Fiorletta]{Diego Fiorletta}
\address{Diego Fiorletta:
Dipartimento di Matematica\\ Sapienza Universit\`{a} di Roma\\
Piazzale A.~Moro 2\\ 00185 Roma\\ Italy}
\email{diego.fiorletta@uniroma1.it}

\thanks{%
The authors are partially supported by the MIUR PRIN project
2020XB3EFL, ``Hamiltonian and Dispersive PDEs'', by the Progetto
Ricerca Scientifica 2024 ``Wave dynamics in heterogeneous media''
of Sapienza University,and
by the Gruppo Nazionale per l'Analisi Matematica, la
Probabilit\`{a} e le loro Applicazioni (GNAMPA),
Project CUP E53C23001670001.
The second author is partially supported by the Progetto Avvio alla
Ricerca 2025 ``Indeterminazione dinamica e sue applicazioni'' of Sapienza Universit\`{a} di Roma.
}
\subjclass[2020]{%
35Q41
, 35J10
, 42B37
}
\keywords{
Uncertainty principle, Amrein--Berthier theorem, fractional
Laplacian, localized propagators%
}

\begin{document}

\begin{abstract}
  We prove dynamical Amrein--Berthier uncertainty principles for
  fractional Schr\"odinger flows. For the free Hamiltonian
  $H=(-\Delta)^\alpha$ on $L^2(\mathbb{R}^n)$, with
  $\alpha>\frac{1}{2}$, we show that two--time localization on
  finite measure sets $E,F$ forces the quantitative estimate
  \begin{equation*}
    \|u(t)\|_{L^{2}}\lesssim_{E,F,T,n,\alpha}
    \|u(0)\|_{L^{2}(E^{c})}
    +
    \|u(T)\|_{L^{2}(F^{c})},
    \qquad
    T\neq0,\ t\in \mathbb{R}
  \end{equation*}
  for $u(t)=e^{-itH}u(0)$
  at every time. The threshold $\alpha>\frac{1}{2}$ is tied to
  the stationary phase structure of the fractional kernel. If
  $\alpha\ge1$ the sets can be arbitrary finite measure sets; if
  $\frac{1}{2}<\alpha<1$ we impose the finiteness of a natural
  interaction energy 
  \begin{equation*}
    \textstyle
    \mathcal{I}_{\gamma}(E,F)
    =
    \int_{\mathbb{R}^n \times \mathbb{R}^n}
    \one{F}(x)|x-y|^{2\gamma}\one{E}(y)\,dx\,dy<\infty,
    \qquad
    \gamma = \frac{n(1-\alpha)}{2 \alpha-1}
  \end{equation*}
  of the pair $(E,F)$, essentially equivalent to a
  sufficiently fast joint decay of the measure of the sets at infinity. In particular,
  compact support at two distinct times is impossible for a
  nonzero solution. We also prove corresponding results for
  one dimensional fractional Hamiltonians
  $(-\partial_x^2+V)^\alpha$ under weighted scattering
  assumptions, and for higher order Hamiltonians $(-\Delta)^m+V$
  for suitable classes of decaying potentials $V$.
\end{abstract}

\maketitle



\section{Introduction}
\label{sec:intro}

Uncertainty principles express the impossibility of localizing a
function too strongly in two different representations, typically
in physical and Fourier variables; see e.g.~the survey
\cite{FollandSitaram97-a}. The classical forms, from Heisenberg
to Hardy \cite{Hardy33-a} and Beurling--H\"{o}rmander
\cite{Hormander91-a}, measure localization through moments,
Gaussian decay, or exponential weights. Another line of results
uses localization on sets of finite measure: the theorem of
Amrein and
Berthier \cite{AmreinBerthier77-a} says that a nonzero
$L^2$ function and its Fourier transform cannot both be
supported on sets of finite measure. Benedicks gave the
corresponding qualitative form \cite{Benedicks85-a}, while
Nazarov and Jaming developed quantitative refinements
\cite{Nazarov93-a,Jaming07-a}. This is the circle of results to
which the present paper belongs.

A \emph{dynamical} Amrein--Berthier principle replaces the
Fourier transform by a time evolution. Given a selfadjoint
Hamiltonian $H$ and the propagator $U(T)=e^{-iTH}$, one asks
whether a solution can be localized in a finite measure set $E$
at time $0$ and in another such set $F$ at time $T\neq0$.
For the free Schr\"odinger Hamiltonian $-\Delta$ this is
essentially the
classical theorem again, because the propagator at nonzero time
is a Fourier transform up to elementary unitary factors. Related
dynamical forms of Hardy's uncertainty principle for
Schr\"odinger evolutions were developed by
Escauriaza, Kenig, Ponce and Vega
\cite{EscauriazaKenigPonce08-a,EscauriazaKenigPonce10-a}.
In our previous work 
\cite{DAnconaFiorletta26-a} we proved
dynamical versions of the Amrein--Berthier principle
for Schr\"odinger operators with subquadratic
potentials. There the main point was to show that suitable
localized propagators are compact; once this is known, unique
continuation yields the
two--time uncertainty estimate.

The present paper asks the same question for fractional
Schr\"odinger flows. For $\alpha>0$, the free fractional flow is
the unitary group generated by
$H=(-\Delta)^\alpha$ on $L^2(\mathbb{R}^n)$:
\begin{equation*}
  u(t)=e^{-it(-\Delta)^\alpha}u_0,
  \qquad
  \widehat{u}(t,\xi)
  =
  e^{-it|\xi|^{2\alpha}}\widehat{u_0}(\xi).
\end{equation*}
Equivalently, the propagator is the convolution with the inverse
Fourier transform of $e^{-it|\xi|^{2\alpha}}$. This is a
nonlocal evolution unless $\alpha$ is an integer, and the
kernel is no longer the explicit quadratic Schr\"odinger kernel
except when $\alpha=1$. Thus the finite-measure uncertainty
problem is controlled by a genuinely fractional oscillatory
kernel.

This nonlocality introduces a new feature already in the free
case. The phase $|\xi|^{2\alpha}$ has order larger than one
exactly when $\alpha>1/2$; below or at the endpoint one meets
the half--wave type conic behavior. In the range
$1/2<\alpha<1$, stationary phase gives the sharp 
growth
\begin{equation*}
  \left|
    \mathcal{F}(e^{i|\cdot|^{2\alpha}})(x)
  \right|
  \simeq
  |x|^{\frac{n(1-\alpha)}{2\alpha-1}}
  \qquad
  \text{at the level of leading asymptotics.}
\end{equation*}
For $\alpha\ge1$ the same kernel is bounded at infinity. The
main theorem reflects precisely this dichotomy. If
$\alpha\ge1$, arbitrary finite measure sets $E,F$ give
compactness of
$\one{F}e^{-iT(-\Delta)^\alpha}\one{E}$. If
$1/2<\alpha<1$, finite measure alone is not the natural
hypothesis, and one must pay the polynomial
growth of the kernel through the interaction energy
$\mathcal{I}_{\gamma}(E,F)$ introduced below.

After compactness of the localized propagator
$L=\one{F}U\one{E}$ is proved, the passage to the
Amrein--Berthier inequality is abstract. 
If one can exclude that $L$ has operator norm 1,
then the full $L^2$ norm of the flow is controlled by the 
mass outside
$E$ at time $0$ and outside $F$ at time $T$. What is new here is
not this abstract step, but the analytic input that provides
compactness in a nonlocal fractional setting.

The paper contains two more
perturbative extensions. In dimension one we
consider fractional powers of $-\partial_x^2+V$ on the positive
absolutely continuous subspace. The proof uses the Jost
representation of the resolvent, classical one dimensional
scattering theory \cite{DeiftTrubowitz79-a}, and Fourier
estimates in the spirit of
\cite{DAnconaFanelli06-a,Mizutani11-a}.
The behavior of the Wronskian at zero energy forces the usual
distinction between regular and resonant thresholds. Finally, we
record a higher order extension for $(-\Delta)^m+V$, $m\ge2$,
in the range where endpoint bounds for the wave operators are
available, relying on the work of Erdogan and Green
\cite{ErdoganGreen22-a,ErdoganGreen23-a}. Recent results
on fractional order perturbations
\cite{ErdoganGoldbergGreen25-a,ErdoganGoldbergGreen25-b}
suggest that the perturbative fractional part of the theory
should have further room to grow.

The estimates also have a standard control theoretic
consequence: by the Hilbert uniqueness method they imply an
observability inequality, and hence exact controllability from
the complement of the observation set (see
e.g.~\cite{Zuazua03-a}). This as an interesting
directon and will be further explored elsewhere.

One core step in the proof can be packaged in
an abstract criterion. If one proves that the
localized propagator $\one{F}U\one{E}$ has operator norm
\emph{strictly} less than one, the uncertainty inequality
follows immediately:

\begin{proposition}[Compactness of the localized dynamics implies
uncertainty] \label{prop:compactCriterion}
  Let $U$ be a unitary operator on $L^2(\mathbb{R}^n)$ and let
  $E,F\subset\mathbb{R}^n$ be measurable sets. If
  \begin{equation*}
    \delta:=\|\one{F}U\one{E}\|_{L^2\to L^2}<1,
  \end{equation*}
  then there is a constant $C=C(\delta)$ such that
  \begin{equation}
    \label{eq:abstractAB}
    \|v\|_{L^2}
    \leq C\left(
    \|v\|_{L^2(E^c)}+\|Uv\|_{L^2(F^c)}
    \right)
  \end{equation}
  for all $v\in L^2(\mathbb{R}^n)$. In particular, if
  $\one{F}U\one{E}$ is compact and there is no nonzero $v\in L^2$
  such that $\operatorname{spt}v\subset E$ and
  $\operatorname{spt}Uv\subset F$, then
  \eqref{eq:abstractAB} holds.
\end{proposition}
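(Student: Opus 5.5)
The plan is to prove the inequality directly by splitting $v$ into its parts inside and outside $E$, and then to treat the compact case separately through a spectral argument.

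\emph{Main estimate.} Write $v=\one{E}v+\one{E^c}v$. Since $U$ is unitary,
\begin{equation*}
  \|\one{E}v\|_{L^2}=\|U\one{E}v\|_{L^2}
  \le \|\one{F}U\one{E}v\|_{L^2}+\|\one{F^c}U\one{E}v\|_{L^2}.
\end{equation*}
The first term is at most $\delta\|\one{E}v\|_{L^2}$. For the second term, write $U\one{E}v=Uv-U\one{E^c}v$, which gives
\begin{equation*}
  \|\one{F^c}U\one{E}v\|_{L^2}\le\|Uv\|_{L^2(F^c)}+\|v\|_{L^2(E^c)}.
\end{equation*}
Because $\delta<1$, the term $\delta\|\one{E}v\|_{L^2}$ can be absorbed into the left-hand side:
\begin{equation*}
  \|\one{E}v\|_{L^2}\le (1-\delta)^{-1}\bigl(\|v\|_{L^2(E^c)}+\|Uv\|_{L^2(F^c)}\bigr).
\end{equation*}
Adding $\|\one{E^c}v\|_{L^2}=\|v\|_{L^2(E^c)}$ then yields \eqref{eq:abstractAB} with $C=1+(1-\delta)^{-1}$.

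\emph{Compact case.} Here it suffices to show $\delta<1$, and I expect this to be the only point needing care. Let $L=\one{F}U\one{E}$. Clearly $\|L\|\le1$. Suppose $\|L\|=1$. Since $L$ is compact, $L^*L$ is compact, selfadjoint and nonnegative with norm $1$. Its norm is therefore an eigenvalue, so there is $w\neq0$ with $\|Lw\|=\|w\|$.

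Set $v=\one{E}w$. Then
\begin{equation*}
  \|w\|=\|\one{F}Uv\|\le\|Uv\|=\|v\|\le\|w\|,
\end{equation*}
so every inequality in this chain is an equality. Equality in $\|v\|=\|w\|$ means $w=\one{E}w$, so $\spt v\subset E$ and $v\neq0$. Equality in $\|\one{F}Uv\|=\|Uv\|$ means $\one{F^c}Uv=0$, so $\spt Uv\subset F$. This contradicts the assumed absence of such $v$. Hence $\delta<1$, and the first part applies.

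Without compactness, $L^*L$ need not attain its norm. This is exactly why the hypothesis is needed there.
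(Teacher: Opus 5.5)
Your proof is correct and follows the paper's argument essentially step by step: you use the same splitting $v=\one{E}v+\one{E^c}v$, absorb $\delta\|\one{E}v\|$ to get the same constant $1+(1-\delta)^{-1}$, and handle the compact case by the same forced-equality chain through the two projections and the unitary factor. The only difference is that you justify norm attainment through the spectral theorem for the compact operator $L^*L$, which the paper simply asserts.
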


\begin{proof}
  Write $P_E=\one{E}$, $Q_E=\one{E^c}$, $P_F=\one{F}$
  and $Q_F=\one{F^c}$. Since
  \begin{equation*}
    P_Ev=P_EU^{-1}P_FUP_Ev+P_EU^{-1}Q_FUP_Ev,
  \end{equation*}
  we have
  \begin{equation*}
    \|P_Ev\|_2
    \leq
    \delta\|P_Ev\|_2+\|Q_FUv\|_2+\|UQ_Ev\|_2.
  \end{equation*}
  Thus
  \begin{equation*}
    (1-\delta)\|P_Ev\|_2
    \leq
    \|Q_FUv\|_2+\|Q_Ev\|_2,
  \end{equation*}
  and \eqref{eq:abstractAB} follows from
  $\|v\|_2\leq\|P_Ev\|_2+\|Q_Ev\|_2$. For the final assertion,
  suppose that $\one{F}U\one{E}$ is compact and has norm one. Its
  norm is then attained by some $v$ with $\|v\|_2=1$. Equality in
  the two orthogonal projections and in the unitary factor forces
  $P_Ev=v$ and $P_FUv=Uv$, which is precisely the
  excluded two--time localization.
\end{proof}


We give a brief synopsis of the paper. Section
\ref{sec:intr} fixes notations and describes our main results. 
Section \ref{sec:free_frac_flow} proves the oscillatory estimate 
for the free fractional kernel and derives compactness of 
localized free propagators. Section \ref{sec:one_dim_flow} treats
fractional powers of one dimensional perturbed Schr\"odinger
operators through the Jost representation and the corresponding
Fourier estimates. The final section combines these compactness
results with the abstract criterion above to prove the main
uncertainty inequalities.


\textbf{Data Availability Statement}. No new data were created
or analysed in this study. Data sharing is not applicable
to this article.

\textbf{Conflict Of Interest Statement}. All authors declare
that they have no conflicts of interest.

\section{Main results}
\label{sec:intr}

Given a real number $\alpha>0$, the fractional Laplacian
$(-\Delta)^\alpha$ is the nonnegative selfadjoint operator on
$L^2(\mathbb{R}^n)$ with Fourier multiplier $|\xi|^{2\alpha}$.

For $\gamma\ge0$ and measurable sets $E,F\subset\mathbb{R}^n$ we
define the interaction energy
\begin{equation}
  \label{eq:interactionEnergy}
  \mathcal{I}_{\gamma}(E,F)
  =
  \int_{\mathbb{R}^n \times \mathbb{R}^n}
  \one{F}(x)|x-y|^{2\gamma}\one{E}(y)\,dx\,dy.
\end{equation}
Thus $\mathcal{I}_{0}(E,F)=|E||F|$, while positive $\gamma$
measures how far the two sets interact at spatial infinity.

Our first result is the following.

\begin{theorem}[Fractional Amrein--Berthier Inequality]
\label{the:FracDynAB}
  Let $n\ge 1$, $\alpha > \frac{1}{2}$ and $H = (-
  \Delta)^{\alpha}$. Let $E,F \subset \mathbb{R}^{n}$ be two
  sets of finite measure, and if $\alpha<1$ further assume that
\begin{equation}
  \label{eq:MeasDecay2}
  \mathcal{I}_{\gamma}(E,F)<+\infty,
  \qquad
  \gamma = \frac{n(1-\alpha)}{2 \alpha-1}.
\end{equation}
  Then for any $T\neq0$ there is a constant $C=C(E,F,T,n,\alpha)$
  such that any solution to $i \partial_{t}u=Hu$ satisfies
\begin{equation}
  \label{eq:AmBerIneq}
  \|u(t)\|_{L^{2}}\le
  C (  \|u(0)\|_{L^{2}(E^{c})}+\|u(T)\|_{L^{2}(F^{c})} )
  \qquad
  \forall t\in \mathbb{R}.
\end{equation}
\end{theorem}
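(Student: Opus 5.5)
The plan is to reduce everything to Proposition~\ref{prop:compactCriterion} with $U=e^{-iTH}$. Since $U$ is unitary and commutes with the flow, $\|u(t)\|_{L^2}=\|u(0)\|_{L^2}$ for every $t$. It therefore suffices to prove \eqref{eq:abstractAB} with $v=u(0)$ and $Uv=u(T)$. By the proposition this requires two facts: (a) $L=\one{F}e^{-iT(-\Delta)^\alpha}\one{E}$ is compact on $L^2$, and (b) no nonzero $v\in L^2$ has $\spt v\subset E$ and $\spt e^{-iT(-\Delta)^\alpha}v\subset F$.

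For (a) I would show that $L$ is Hilbert--Schmidt. The propagator is convolution with $K_T(x)=(2\pi)^{-n}\int e^{ix\cdot\xi-iT|\xi|^{2\alpha}}d\xi$, interpreted as an oscillatory integral. By scaling, $K_T(x)=|T|^{-n/(2\alpha)}K_{\pm1}(x|T|^{-1/(2\alpha)})$. The key analytic input is the pointwise bound
\[
|K_1(x)|\lesssim \langle x\rangle^{\gamma},\qquad \gamma=\frac{n(1-\alpha)}{2\alpha-1},
\]
valid for $\alpha>1/2$, together with local boundedness of $K_1$; for $\alpha\ge1$ this gives $\gamma\le 0$, so $K_1$ is bounded.

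To prove the bound, split the integral with a smooth cutoff $\chi(\xi)$ near the origin. The low-frequency piece is bounded because $\chi e^{-i|\xi|^{2\alpha}}$ is integrable. For the high-frequency piece, use polar coordinates or directly the stationary phase for $\phi(\xi)=x\cdot\xi-|\xi|^{2\alpha}$. The critical point satisfies $|\xi_0|^{2\alpha-1}\sim|x|$, so $|\xi_0|\sim|x|^{1/(2\alpha-1)}$. The Hessian has eigenvalues of size $|\xi_0|^{2\alpha-2}$, so the contribution is $\lesssim|\xi_0|^{-n(2\alpha-2)/2}=|x|^{n(1-\alpha)/(2\alpha-1)}$. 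Away from the critical shell, $|\nabla\phi|\gtrsim|x|+|\xi|^{2\alpha-1}$, and repeated integration by parts gives rapid decay. This needs a dyadic decomposition in $|\xi|$ and uniform symbol estimates on $|\xi|^{2\alpha}$; $\alpha>1/2$ is used exactly so that $|\xi|^{2\alpha-1}\to\infty$ and the non-stationary region is controlled.

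With the bound in hand, $\|L\|_{HS}^2=\iint\one{F}(x)|K_T(x-y)|^2\one{E}(y)\,dx\,dy$. For $\alpha\ge1$ this is $\lesssim_T|E||F|<\infty$. For $1/2<\alpha<1$ it is $\lesssim_T|E||F|+\mathcal{I}_\gamma(E,F)<\infty$ by \eqref{eq:MeasDecay2}. In both cases $L$ is compact.

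For (b), suppose $v\in L^2$ is supported in $E$ and $w=e^{-iT(-\Delta)^\alpha}v$ is supported in $F$. Since $|E|<\infty$, $v\in L^1$, so $\hat v$ is bounded and continuous. Likewise $w\in L^1$, so $\hat w=e^{-iT|\xi|^{2\alpha}}\hat v$ is bounded and continuous. I would then like an Amrein--Berthier/Benedicks type argument. The natural trick for $\alpha=1$ is the explicit factorization $e^{-iT\Delta}=M\mathcal{F}M$ with Gaussian-phase multipliers $M$, which reduces directly to Benedicks' theorem.

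For general $\alpha$, the fallback is to use the compactness in (a). The space $\mathcal{N}$ of such $v$ is exactly the eigenspace of the compact operator $\one{E}U^{*}\one{F}U\one{E}$ for the eigenvalue $1$, hence finite dimensional. It is also invariant under the modulations $v\mapsto e^{ix\cdot\eta}v$? No: modulations do not commute with $U$ for $\alpha\neq1$. Translations $\tau_h$ do commute with $U$, but they move $E$ and $F$.

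Instead I would exploit finite dimensionality differently. Consider the sets $E_R=E\cup B_R$ and $F_R=F\cup B_R$; these still satisfy the hypotheses, and the corresponding space $\mathcal{N}_R\supset\mathcal{N}$ is finite dimensional. For $|h|$ small, $\tau_h\mathcal{N}\subset\mathcal{N}_R$ when $E,F$ are bounded, which yields a finite-dimensional translation-invariant space. Such spaces consist of exponential polynomials, and these are not in $L^2$ unless zero.

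For unbounded $E,F$ this argument fails. There I would try an analyticity argument instead: $\hat v$ is the Fourier transform of an $L^1$ function, and the goal is to show $\hat v$ must vanish. This step, proving the absence of nonzero two-time localized solutions for general finite measure sets and general $\alpha$, is where I expect the main obstacle. I would first attempt to adapt Benedicks' periodization argument to the convolution kernel $K_T$. Once (a) and (b) hold, Proposition~\ref{prop:compactCriterion} gives \eqref{eq:AmBerIneq} with $C=C(\delta)$, depending on $E,F,T,n,\alpha$.
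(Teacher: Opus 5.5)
Your reduction to Proposition~\ref{prop:compactCriterion} and your part (a) follow the paper. In particular, your Hilbert--Schmidt bound uses the $\langle x\rangle^{\gamma}$ growth of $\mathcal{F}(e^{i|\cdot|^{2\alpha}})$, exactly as the paper does. Your dyadic $n$-dimensional stationary phase is only a technical variant of the paper's Bessel/van der Corput computation.

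\textbf{The gap is in (b).} You exclude nonzero two--time localized solutions only for bounded $E,F$. The case of general finite measure sets is left explicitly open, yet that is exactly what the theorem asserts: arbitrary finite measure sets when $\alpha\ge1$, and any pair with $\mathcal{I}_\gamma(E,F)<\infty$ when $\alpha<1$. The fallbacks you name are not carried out. For Benedicks' periodization it is unclear how to adapt it, since it relies on the Poisson summation structure of the Fourier transform, which the fractional propagator lacks. As it stands, the proposal proves the theorem only for bounded sets.

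\textbf{The missing idea} is already latent in your bounded-case argument. You need neither a ball nor a continuum of translates, only countably many small translates, and these can be absorbed into a set of finite measure.

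\emph{Construction.} Take pairwise distinct nonzero $z_k\to0$. Set $E_0=E$, $E_k=E_{k-1}\cup(E_{k-1}+z_{k-1})$, $E_\infty=\bigcup_kE_k$, and build $F_\infty$ the same way with the same $z_k$.

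\emph{Finiteness.} Since $|(E_{k-1}+z)\setminus E_{k-1}|\le\|\one{E_{k-1}}(\cdot-z)-\one{E_{k-1}}\|_{L^1}\to0$ as $z\to0$, the measures depend continuously on $z_{k-1}$. By dominated convergence, using $|x-y-z|^{2\gamma}\lesssim|x-y|^{2\gamma}+|z|^{2\gamma}$, so does the interaction energy of the enlarged pair. You can therefore choose $z_{k-1}$ so that the measures and $\mathcal{I}_\gamma$ grow by at most $2^{-k}$ at step $k$. Hence $|E_\infty|,|F_\infty|<\infty$ and $\mathcal{I}_\gamma(E_\infty,F_\infty)<\infty$.

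\emph{Contradiction.} By (a), the localized operator for $(E_\infty,F_\infty)$ is compact, so its space $\mathcal{N}_\infty$ of two--time localized functions is finite dimensional. For $v\in\mathcal{N}$, each $\tau_{z_k}v$ is supported in $E+z_k\subset E_{k+1}\subset E_\infty$. Also $U\tau_{z_k}v=\tau_{z_k}Uv$ is supported in $F+z_k\subset F_\infty$. So $\tau_{z_k}v\in\mathcal{N}_\infty$ for every $k$. If $v\neq0$, these translates are linearly independent: a finite sum $\sum_k c_k e^{-iz_k\cdot\xi}$ vanishing on the positive-measure set $\{\widehat v\neq0\}$ vanishes identically by analyticity. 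Hence $v=0$.

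This is the paper's argument. The same linear-independence step also tidies your bounded case, where the appeal to translation-invariant spaces of exponential polynomials is not quite justified: the span of the $\tau_hv$, $|h|$ small, is not literally translation invariant.
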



In particular we have the following result
(compare with \cite{HuangSoffer21-a} and \cite{ChoiWalton26-a}):

\begin{corollary}[Compact support at two times]
\label{cor:CompactSupportFrac}
  Let $H=(-\Delta)^\alpha$ with $\alpha>\frac{1}{2}$ and let
  $T\neq0$. If $u(t)=e^{-itH}u_0$ satisfies $\operatorname{spt}
  u(0)\subset E$ and $\operatorname{spt} u(T)\subset F$ for two
  compact sets $E,F\subset\mathbb{R}^n$, then $u\equiv0$.
\end{corollary}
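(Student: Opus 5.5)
The corollary follows directly from Theorem \ref{the:FracDynAB}. Compact sets $E,F$ are bounded, so they have finite measure. In the range $\frac12<\alpha<1$ the extra hypothesis \eqref{eq:MeasDecay2} also holds. If $E,F\subset B(0,R)$, then $|x-y|\le 2R$ on $F\times E$, and therefore
\begin{equation*}
  \mathcal{I}_{\gamma}(E,F)\le (2R)^{2\gamma}|E||F|<\infty .
\end{equation*}
So the theorem applies for every $\alpha>\frac12$, with $C=C(E,F,T,n,\alpha)$.

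Next I apply \eqref{eq:AmBerIneq} to $u(t)=e^{-itH}u_0$. The assumption $\operatorname{spt}u(0)\subset E$ means $u(0)=0$ almost everywhere on $E^c$, so $\|u(0)\|_{L^2(E^c)}=0$. Likewise $\operatorname{spt}u(T)\subset F$ gives $\|u(T)\|_{L^2(F^c)}=0$. The inequality then yields $\|u(t)\|_{L^2}=0$ for every $t$, so $u\equiv0$.

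I expect no real obstacle at this level. All the difficulty sits inside Theorem \ref{the:FracDynAB}: compactness of $\one{F}e^{-iTH}\one{E}$, obtained from the kernel bound via a Hilbert--Schmidt estimate using $\mathcal{I}_\gamma$, and the exclusion of norm-one localized states through Proposition \ref{prop:compactCriterion}. The only checks needed here are that compactness of $E,F$ implies both hypotheses of the theorem, and that the support conditions make the right-hand side of \eqref{eq:AmBerIneq} vanish.
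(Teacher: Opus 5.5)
Your proposal is correct and follows the paper's own proof: check that compact sets satisfy the hypotheses of Theorem~\ref{the:FracDynAB}, then observe that the right-hand side of \eqref{eq:AmBerIneq} vanishes. Your explicit bound $\mathcal{I}_{\gamma}(E,F)\le(2R)^{2\gamma}|E||F|$ makes precise the one verification the paper leaves implicit.
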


\begin{proof}
  Compact sets satisfy the hypotheses of Theorem
  \ref{the:FracDynAB}, and the right hand side of
  \eqref{eq:AmBerIneq} is zero. 
\end{proof}

\begin{remark}[The threshold $\alpha>\frac{1}{2}$]
\label{rem:thresholdAlpha}
The restriction $\alpha>\frac{1}{2}$ is exactly the condition
that the oscillatory phase $|\xi|^{2\alpha}$ have exponent
larger than one. In the proof of Proposition
\ref{pro:estoscilint}, this is what produces a genuine kernel
with polynomial bounds. More precisely, for
$\frac{1}{2}<\alpha<1$ the leading term of
$\mathcal{F}(e^{i|\cdot|^{2\alpha}})$ grows like
$|x|^{\gamma}$, where
\begin{equation*}
  \gamma=\frac{n(1-\alpha)}{2\alpha-1}.
\end{equation*}
The interaction condition \eqref{eq:MeasDecay2} is therefore
exactly the condition which makes the localized kernel
Hilbert--Schmidt within the present argument. At the endpoint
$\alpha=\frac{1}{2}$ the multiplier $e^{it|\xi|}$ has a
wave type conic singularity rather than the fractional
Schrodinger smoothing used here, and the Fourier transform is no
longer controlled by the Hilbert--Schmidt argument below. Thus
the present method can not reach the endpoint.
\end{remark}

The finiteness of $\mathcal{I}_{\gamma}(E,F)$ is a condition on
the joint size at infinity of the two sets. For instance, it
is implied by
\begin{equation}
  \label{eq:MeasDecayCond}
  \textstyle
  |(E\cup F) \setminus B_{R}(0)| \lesssim
  R^{-2 \gamma  -1-\epsilon},
  \qquad
  \gamma=\frac{n(1-\alpha)}{2 \alpha-1}.
\end{equation}
Indeed, one has
\begin{equation*}
  \mathcal{I}_{\gamma}(E,F)\lesssim
  \int_{E}  \bra{x}^{2 \gamma} dx
  \int_{F}\bra{y}^{2 \gamma}dy
\end{equation*}
and
\begin{equation*}
  \int_{E}\bra{x}^{2\gamma}dx\lesssim
  1+\sum_{k\ge2}\int_{B_{k}(0)\setminus B_{k-1}(0)}
  |x|^{2\gamma}\one{E}dx \lesssim
  1+\sum_{k\ge2}k^{2\gamma}k^{-2\gamma-1-\epsilon}<\infty.
\end{equation*}

Our methods allow us to treat one dimensional fractional
operators with potential terms. Define the following
weighted $L^1$ spaces,
\begin{equation}
  \label{eq:weightL1}
  \begin{aligned}
  L^1_{N}(\mathbb{R})
  &=
  \{
  f : \mathbb{R} \rightarrow \mathbb{R}
  \text{ s. t. }
  \lVert \bra{x}^N f \rVert_1 < + \infty
  \},
  \end{aligned}
\end{equation}
for $N\ge0$. Obviously, $L^1_{N} \subset L^1_{M}$ for
$N > M$.

Let $V \in L^1_1(\mathbb{R})$, real valued, and set
\begin{equation}
  \label{eq:onedham}
  h=-\partial_x^2+V .
\end{equation}
It is well known that
$h$ is selfadjoint in $L^2(\mathbb{R})$ with form
domain $H^1(\mathbb{R})$; its absolutely continuous spectrum is
the positive half-line $[0,+\infty)$, the singular continuous
spectrum is absent, and its eigenvalues, if present, are
strictly negative. If we assume $V\ge0$, the spectrum is
entirely absolutely continuous, the operator is nonnegative,
and we may define for $\alpha>0$ the fractional powers
$h^{\alpha}$
via the spectral theorem on the positive spectrum $[0,+\infty)$.

The Jost functions $f_{\pm}(\lambda)$ are solutions
to the equation:
\begin{equation}
  \label{eq:LipSchw}
  ( -\partial^2_x + V) f (\lambda , x) = \lambda^2 f(\lambda,x),
\end{equation}
which satisfy the asymptotic conditions
\begin{equation}
  \label{eq:jostAsymp}
  |f_{\pm}(\lambda,x) - e^{\pm i \lambda x}|
  \rightarrow 0  \qquad \text{ as } x \rightarrow \pm \infty.
\end{equation}
They exist and are unique assuming $V
\in L^1_1(\mathbb{R})$; see
\cite{DeiftTrubowitz79-a}. Their Wronskian
\begin{equation}
  \label{eq:wronsk}
  \begin{aligned}
  W(\lambda)
  &=
  f_{+}(\lambda,x) \cdot
  \partial_x f_{-}(\lambda,x)
  \\
  &\quad
  -
  f_{-}(\lambda,x) \cdot
  \partial_x f_{+}(\lambda,x),
  \end{aligned}
\end{equation}
is independent of $x$. For real nonzero $\lambda$ it does not
vanish, and the only possible threshold zero is at $\lambda=0$.

In the following, we will make use of two different assumptions
over the potential:

Assumption \textbf{(A)}. $\exists N \in \mathbb{N}$, $N >
\frac{4 \alpha - 1}{2 \alpha - 1}$ such that $V \in
L^1_N(\mathbb{R})$ and $W(0) \neq 0$.

Assumption \textbf{(B)}. $\exists N \in \mathbb{N}$, $N >
\frac{6 \alpha - 2}{2 \alpha -1}$ such that $V \in
L^1_N(\mathbb{R})$ and $ W(0) = 0$.

\begin{theorem}[Fractional A--B Inequality II]
\label{the:PotFracDynAB}
  Let $\alpha > \frac{1}{2}$, assume that $V\ge0$ satisfies
  Assumption \textbf{(A)} or \textbf{(B)} and let $H=h^{\alpha}$.
  Then, for
  any $T \neq 0$ and any compact sets $E,F\subset\mathbb{R}$,
  there is a constant $C=C(E,F,T,n,\alpha,V)$ such that every
  solution $u(t)=e^{-itH}u_0$, with $u_0\in
  L^2(\mathbb{R})$, satisfies \eqref{eq:AmBerIneq}.
\end{theorem}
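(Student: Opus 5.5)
The plan is to reduce Theorem~\ref{the:PotFracDynAB} to Proposition~\ref{prop:compactCriterion} with $U=e^{-iTH}$. Since $e^{-itH}$ is unitary, $\|u(t)\|_{L^2}=\|u_0\|_{L^2}$ for every $t$, so it suffices to prove \eqref{eq:abstractAB} for $v=u_0$. By Proposition~\ref{prop:compactCriterion}, two facts are needed:
\begin{enumerate}[label=(\roman*)]
\item compactness of $\one{F}e^{-iTh^\alpha}\one{E}$;
\item the absence of nonzero $v$ with $\spt v\subset E$ and $\spt e^{-iTh^\alpha}v\subset F$.
\end{enumerate}
For (i), the approach is to show that the kernel $K_T(x,y)$ of $e^{-iTh^\alpha}$ is locally bounded, uniformly for $x\in F$ and $y\in E$. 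Since $E,F$ are compact, this makes $\one{F}K_T\one{E}\in L^2(\mathbb{R}\times\mathbb{R})$, so the localized operator is Hilbert--Schmidt and hence compact.

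To obtain the kernel, I would use Stone's formula together with the Jost representation of the resolvent. With $R(x,y)=\mathrm{Im}\,\frac{f_+(\lambda,x\vee y)f_-(\lambda,x\wedge y)}{W(\lambda)}$, this gives
\begin{equation*}
  K_T(x,y)=\frac{1}{\pi}\int_0^\infty e^{-iT\lambda^{2\alpha}}\,
  R(x,y)\,\lambda\,d\lambda ,
\end{equation*}
up to normalization. I would write $f_\pm(\lambda,x)=e^{\pm i\lambda x}m_\pm(\lambda,x)$, so the integrand becomes $e^{-iT\lambda^{2\alpha}\pm i\lambda(x-y)}a(\lambda,x,y)$, with an amplitude $a$ built from $m_+m_-/W$ and its conjugate. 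For $x,y$ in a compact set, Deift--Trubowitz theory gives that $m_\pm(\cdot,x)$ is bounded with $\lambda$-derivatives bounded up to order $N-1$ when $V\in L^1_N$. The large-$\lambda$ asymptotics $m_\pm\to1$ and $W(\lambda)\sim -2i\lambda$ are uniform on compacts, and they give symbol-type bounds $|\partial_\lambda^k a|\lesssim \lambda^{-k}$ for $\lambda\ge1$. This is exactly the setting of Proposition~\ref{pro:estoscilint}, whose oscillatory estimate I would adapt.

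I would then split the integral with a cutoff $\chi(\lambda)$ near $\lambda=0$.
\begin{itemize}
\item \textbf{Low energies.} The contribution is bounded provided $a(\lambda)\lambda$ is integrable near $0$. If $W(0)\neq0$ (Assumption \textbf{(A)}) this is immediate. If $W(0)=0$ (Assumption \textbf{(B)}), then $W(\lambda)\sim c\lambda$; the resonance cancellation of the imaginary part, together with the extra factor $\lambda$, should keep the amplitude bounded, using one more derivative of $m_\pm$ at $0$. This is why Assumption \textbf{(B)} requires a larger $N$.
\item \textbf{High energies.} Since $2\alpha>1$, the phase $\phi(\lambda)=T\lambda^{2\alpha}\mp\lambda(x-y)$ has a single nondegenerate stationary point $\lambda_0\sim |x-y|^{1/(2\alpha-1)}$, which is bounded because $|x-y|$ is bounded. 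Away from $\lambda_0$ I would integrate by parts $k$ times using $|\phi'|\gtrsim\lambda^{2\alpha-1}$. Each step gains $\lambda^{-(2\alpha-1)}$, while each derivative falling on the amplitude costs only $\lambda^{-1}$, and the measure contributes one factor $\lambda$. Convergence therefore needs roughly $k(2\alpha-1)>2$, and an additional step is needed to handle $\partial_\lambda a$. Counting derivatives in this way matches the thresholds $N>\frac{4\alpha-1}{2\alpha-1}$ and $N>\frac{6\alpha-2}{2\alpha-1}$. Near $\lambda_0$, van der Corput gives a bounded contribution.
\end{itemize}
The main obstacle is this derivative bookkeeping. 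The available regularity of $m_\pm$ in $\lambda$ is only $N-1$ derivatives. In the resonant case, dividing by $W(\lambda)$ near zero costs one more derivative, so the integration by parts has to be arranged to use exactly the regularity supplied by $V\in L^1_N$.

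For (ii), suppose $v\neq0$ has $\spt v\subset E$ and $\spt e^{-iTH}v\subset F$. I would pass to the distorted Fourier transform $\tilde v(\lambda)=\int f_\pm(\lambda,x)v(x)\,dx$. Since $f_\pm(\lambda,x)$ is entire in $\lambda$ for fixed $x$ and $E$ is compact, $\tilde v$ extends analytically to $\lambda\in\mathbb{C}$, with exponential-type growth (Paley--Wiener for compactly supported data). The same holds for $\widetilde{e^{-iTH}v}(\lambda)=e^{-iT\lambda^{2\alpha}}\tilde v(\lambda)$ on $\lambda>0$. Then the quotient of two functions analytic near $(0,\infty)$ equals $e^{-iT\lambda^{2\alpha}}$ on a set where $\tilde v\neq0$. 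Its analytic continuation $e^{-iT\lambda^{2\alpha}}$ has growth $\exp(c|\lambda|^{2\alpha})$ along suitable rays, which is incompatible with the ratio of two functions of exponential type when $2\alpha>1$ (Phragm\'en--Lindel\"of, or a Hadamard factorization comparison of orders). For non-integer $\alpha$, the branch point at $0$ already contradicts entireness. This contradiction shows $v=0$, and Proposition~\ref{prop:compactCriterion} then gives \eqref{eq:AmBerIneq}.
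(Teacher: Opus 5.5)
Your reduction to Proposition~\ref{prop:compactCriterion} and the Hilbert--Schmidt argument in (i) follow the paper (Stone formula with the Jost kernel, as in Theorem~\ref{the:1dComp}). Doing the high-energy part as stationary phase in $\lambda$, instead of integrating by parts in $\mu=\lambda^{2\alpha}$, is fine. One inaccuracy: for non-smooth $V$ the amplitude does not obey $|\partial_\lambda^k a|\lesssim\lambda^{-k}$, since the $\lambda$-derivatives of $m_\pm$ are in general only $O(\lambda^{-1})$. You do not need that bound, because $\lambda/W(\lambda)=i\Theta(\lambda)/2$ is bounded and each integration by parts gains $\lambda^{-(2\alpha-1)}$ from $1/\phi'$. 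So $k$ steps with $k(2\alpha-1)>1$ suffice, and these are affordable under \textbf{(A)} or \textbf{(B)}.

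The genuine gap is in (ii): the Jost solutions are \emph{not} entire in $\lambda$ under the hypotheses of the theorem. The Volterra equations defining $f_\pm(\lambda,x)$ converge only for $\operatorname{Im}\lambda\ge0$, and continuing them below the real axis requires exponential decay of $V$. For $V=\bra{x}^{-M}$, already the first Born term has a logarithmic singularity at $\lambda=0$. So $\int f_\pm(\lambda,x)v(x)\,dx$ is analytic only in the upper half plane, there is no Paley--Wiener bound on $\mathbb{C}$, and neither the branch-point nor the order comparison can be run. As written, your argument covers only compactly supported (or super-exponentially decaying) potentials.

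The missing idea is to use solutions normalized at a finite point. Let $c(z,\cdot)$, $s(z,\cdot)$ solve $-u''+Vu=zu$ with $(u,u')(0)=(1,0)$, resp.\ $(0,1)$. For any $V\in L^1_{\mathrm{loc}}$ these are entire in $z$, with $|c|+|s|\lesssim_R e^{R|\operatorname{Im}\sqrt z|}$ for $|x|\le R$. Hence, for compactly supported $v$, the functions $A_v(z)=\int c(z,x)v(x)\,dx$ and $B_v(z)=\int s(z,x)v(x)\,dx$ are entire of order $\le\frac12$.

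For real $\lambda\neq0$ we have $W(\lambda)\neq0$, so $f_+(\lambda,\cdot)$ and $f_-(\lambda,\cdot)$ span the solutions at energy $\lambda^2$. The spectral identity for the Jost transforms of $v$ and $w=e^{-iTh^\alpha}v$ holds pointwise by continuity for the two Jost solutions at each energy $\lambda^2>0$. It therefore passes to every solution at that energy, giving $A_w=e^{-iTz^\alpha}A_v$ and $B_w=e^{-iTz^\alpha}B_v$ on $(0,\infty)$.

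Your dichotomy then works in the energy variable $z$. In $\lambda$, the branch point occurs iff $2\alpha\notin\mathbb{N}$, not $\alpha\notin\mathbb{N}$.
\begin{itemize}
\item If $\alpha\notin\mathbb{N}$, the meromorphic function $A_w/A_v$ would equal $e^{-iTz^\alpha}$ off $(-\infty,0]$, but the boundary values of $e^{-iTz^\alpha}$ from above and below the negative axis differ.
\item If $\alpha\in\mathbb{N}$, the entire function $e^{-iTz^\alpha}$ has order $\alpha\ge1$, so it cannot be a quotient of entire functions of order $\le\frac12$ (compare Nevanlinna characteristics).
\end{itemize}
Hence $A_v\equiv B_v\equiv0$, the distorted Fourier transform of $v$ vanishes, and $v=0$.

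So repaired, your (ii) is a genuinely different route from the paper's. The paper takes the maximizer given by compactness, applies finite-propagation operators built from $h+c_0$ to produce a compactly supported eigenfunction of $h$, and contradicts ODE unique continuation. Your argument rules out two-time compactly supported solutions directly, without using compactness for this step and for every $\alpha>0$. Its cost is that it is tied to compact supports and to the one-dimensional entire fundamental system.
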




By adapting our methods, we can also treat higher order
Schr\"odinger operators of the form
\begin{equation}
  \label{eq:powLapHam}
  H=(-\Delta)^m + V, \qquad 2 \leq m \in \mathbb{N},
\end{equation}
acting as unbounded operators on $L^2(\mathbb{R}^n)$. We will
make use of regularizing effects of free higher
order Schr\"odinger propagators \cite{Balabane89-a},
still valid for flows generated by the perturbed hamiltonian
\eqref{eq:powLapHam}, as long as the wave operators
\begin{equation}
  \label{eq:waveOp}
  W_{\pm}
  =
  s - \lim_{t \rightarrow \pm \infty}
  e^{i t H} e^{- i t (- \Delta)^m}
\end{equation}
enjoy the $L^{\infty}$ (endpoint) boundedness. Such property
was proved in
\cite{ErdoganGreen22-a,ErdoganGreen23-a},
in the range $n > 2m$, under suitable assumptions for $V$, that
we reproduce here for the benefit of the reader.

\textbf{Assumption (H)}: for the real-valued potential $V$,
\begin{enumerate}
\item $\exists \delta > 0$ such that
\begin{itemize}
\item $\lVert \bra{\cdot}^{\frac{4m + 1 - n}{2}+\delta} V(\cdot) \rVert_2 \lesssim 1$ when $2m < n <4m -1$;
\item  $\lVert \bra{\cdot}^{1+\delta} V(\cdot) \rVert_{H^{\delta}} \lesssim 1$ when $n=4m -1$;
\item $\lVert \mathcal{F} ( \bra{\cdot}^{\sigma} V(\cdot) ) \rVert_{L^{\frac{n-1-\delta}{n-2m-\delta}}}$, 
for some $\sigma > \frac{2n - 4m}{n-1-\delta} + \delta$, when $n > 4m -1$. 
\end{itemize}
\item $|V(x)| \lesssim \bra{x}^{-\beta}$, with $\beta > n + 5$ when $n$ is odd, and $\beta > n + 4$ when $\beta$ is even.
\item The associated higher order Schr\"odinger operator \eqref{eq:powLapHam} has no positive eigenvalues and zero energy is regular (this means that the only distributional
  solution to $H \psi=0$ with $\bra{x}^{\frac n2-2m-}\psi\in L^{2}$
  is 0).
\end{enumerate}

Notice that under our assumptions on the potential, the singular continuous spectrum of $H$ is empty, and its eigenvalues, if present, are finite and negative (see e.g.~\cite{EgorovKondratev90-a}).

\begin{theorem}[Higher-order extension]
\label{the:hiOrdAB}
Let $H$ as in \eqref{eq:powLapHam}. Suppose
\begin{equation}
  \label{eq:dimAssump}
  n \geq 7 \text{ if } m = 2, \qquad
  n > 2m \text{ if } m \geq 3.
\end{equation}
Moreover, suppose that $V$ satisfies Assumption \textbf{(H)}. Then, for
any $T \neq 0$ and any finite measure sets $E$,$F \subset
\mathbb{R}^n$, solutions to $i \partial_{t} u = H
u$ satisfy \eqref{eq:AmBerIneq}, for some constant $C = C(E,F,T,n,\alpha,V)$.
\end{theorem}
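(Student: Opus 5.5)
The argument follows the scheme of Proposition \ref{prop:compactCriterion} with $U=e^{-iTH}$. Since $u(t)=e^{-itH}u(0)$ is unitary, it suffices to prove \eqref{eq:AmBerIneq} at $t=0$. We must show two things: that $\one{F}e^{-iTH}\one{E}$ is compact, and that no nonzero $v$ has $\operatorname{spt} v\subset E$ and $\operatorname{spt} e^{-iTH}v\subset F$.

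\emph{Compactness.} Split $e^{-iTH}=e^{-iTH}P_{ac}+e^{-iTH}P_{pp}$. Under Assumption (H) the point spectrum consists of finitely many negative eigenvalues with $L^2$ eigenfunctions, so the second piece has finite rank. For the first piece, use the intertwining relation $e^{-iTH}P_{ac}=W_+e^{-iT(-\Delta)^m}W_+^*$. Since $\one{F}$ and $\one{E}$ lie in $L^2$, it is natural to aim for a Hilbert--Schmidt bound via an $L^1\to L^\infty$ estimate. The free propagator $e^{-iT(-\Delta)^m}$ is convolution with a bounded kernel for $T\ne0$; this is the case $\alpha=m\ge1$ of the oscillatory estimate in Proposition~\ref{pro:estoscilint}, or Balabane's regularizing bound \cite{Balabane89-a}. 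Hence it maps $L^1\to L^\infty$. The results of Erdogan and Green give boundedness of $W_\pm$ on $L^\infty$ and, by duality, of $W_\pm^*$ on $L^1$; this is exactly where \eqref{eq:dimAssump} and (H) enter. Composing,
\begin{equation*}
\|e^{-iTH}P_{ac}\|_{L^1\to L^\infty}\lesssim |T|^{-n/2m}.
\end{equation*}
By Dunford--Pettis, an $L^1\to L^\infty$ bounded operator has a kernel $K(x,y)\in L^\infty$. Therefore $\one{F}(x)K(x,y)\one{E}(y)$ lies in $L^2(\mathbb{R}^{2n})$, with norm at most $\|K\|_\infty(|E||F|)^{1/2}$, and the localized operator is Hilbert--Schmidt, hence compact. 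Adding back the finite-rank part keeps it compact.

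\emph{Exclusion of two-time localization.} This is the step I expect to be the main obstacle. Suppose $\operatorname{spt} v\subset E$ and $\operatorname{spt} e^{-iTH}v\subset F$. The leverage should come from the free theory, since the potential destroys the explicit structure. My first attempt is to write $e^{-iTH}P_{ac}v$ as $W_+e^{-iT(-\Delta)^m}W_+^*v$ and to exploit analyticity. The localization, together with the compactness just proved, shows that the set of such $v$ is a finite-dimensional space $\mathcal{N}$, namely the eigenspace of $L^*L$ for eigenvalue $1$. This space is invariant under multiplication operators only in a weak sense, so instead I would use that each $v\in\mathcal{N}$ has compact-in-measure support. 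The plan is to show that $v$ has an entire (or quasi-analytic) Fourier transform on the relevant spectral piece. Then I would invoke a Benedicks/Amrein--Berthier type argument for the free group $e^{-iT(-\Delta)^m}$, namely Theorem \ref{the:FracDynAB} with $\alpha=m$, transferring via the wave operators. If this transference fails, the fallback is a unique continuation argument: $w(t)=e^{-itH}v$ solves a higher order equation with decaying $V$, and if $E,F$ were compact one could use Carleman-type estimates. For general finite measure sets I would instead perturb $E,F$ by finite-measure enlargements and use that $\delta<1$ is an open condition. I acknowledge this exclusion step is the delicate point of the argument.

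\emph{Conclusion.} With compactness and exclusion in hand, Proposition~\ref{prop:compactCriterion} gives \eqref{eq:AmBerIneq} at $t=0$, and unitarity extends it to all $t\in\mathbb{R}$.
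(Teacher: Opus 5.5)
\textbf{The gap is in excluding doubly localized states.} Your compactness step is fine and matches the paper's: split off $P_p$, then factor the $P_{ac}$ part through an $L^1\to L^\infty$ bound. Your finite-rank argument for the point part is actually cleaner than the paper's appeal to a compact embedding $H^{2m}(\mathbb{R}^n)\hookrightarrow L^2(\mathbb{R}^n)$, which is not compact on the whole space. The genuine gap is the exclusion step, where none of your options works:
\begin{itemize}
\item \emph{Transference through $W_\pm$} fails because the wave operators are nonlocal. $W_+^*v$ is no longer supported in $E$, so no two-time localization survives on the free side, and Theorem~\ref{the:FracDynAB} cannot be invoked.
\item \emph{Analyticity of $\widehat v$} is unavailable for sets of merely finite measure, and $e^{-iTH}$ is not a Fourier multiplier anyway.
\item \emph{The Carleman route} is not carried out.
\item \emph{Enlarging $E,F$} goes the wrong way. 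One has $\|\one{F}U\one{E}\|\le\|\one{F'}U\one{E'}\|$ for $E\subset E'$, $F\subset F'$, so $\delta<1$ for the larger sets is a stronger statement, not a weaker one.
\item \emph{The translation-enlargement trick} used for Theorem~\ref{the:FracDynAB} needs translation invariance of $H$, which $V$ destroys.
\end{itemize}

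\textbf{The missing idea is $H$-invariance of $\mathcal{N}$.} You correctly identify that $\mathcal{N}=\{v:\spt v\subset E,\ \spt e^{-iTH}v\subset F\}$ is finite dimensional; what you need is that it is invariant under $H$.

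Let $f\in\mathcal{N}$. Then $f\in L^1\cap L^2$ because $|E|<\infty$. Write
\begin{equation*}
  He^{-iTH}=He^{-iTH}P_p+W_+(-\Delta)^m e^{-iT(-\Delta)^m}W_+^* .
\end{equation*}
This maps $L^1\cap L^2$ into $L^2+L^\infty$:
\begin{itemize}
\item the first term is bounded on $L^2$;
\item in the second term, $(-\Delta)^m e^{-iT(-\Delta)^m}$ is bounded $L^1\to L^\infty$ by Balabane's smoothing estimate \cite{Balabane89-a}, while $W_+^*$ and $W_+$ are bounded on $L^1$ and $L^\infty$ respectively.
\end{itemize}

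Since $H$ is a differential operator ($m$ is an integer), $e^{-iTH}Hf=He^{-iTH}f$ is supported in $F$. Because $|F|<\infty$, membership in $L^2+L^\infty$ upgrades to $L^2$. Hence $f\in D(H)$, $\spt Hf\subset E$, and $Hf\in\mathcal{N}$.

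So $H$ restricts to a linear map of the finite-dimensional space $\mathcal{N}$. It therefore has an eigenvector $0\neq g\in\mathcal{N}$ with $Hg=cg$, $c\in\mathbb{R}$. This $g$ is an $L^2$ eigenfunction of $(-\Delta)^m+V$ vanishing on $E^c$. Unique continuation for $(-\Delta)^m+V-c$ (the paper uses \cite{Laba88-a}) forces $g=0$, a contradiction.

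Without this invariance step, or a genuine substitute for it, your argument does not prove the theorem.
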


Uncertainty principles have many applications to other areas of
PDE theory, notably to control theory. 
As a simple application, we can deduce the following
observability inequality for the propagator $e^{-itH}$,
with $H=(-\Delta)^{\alpha}$ or $h^{\alpha}$ or
$(-\Delta)^{m}+V$:

\begin{theorem}
\label{the:Obs1}
  Under the assumptions of theorem \ref{the:FracDynAB} (resp.
  theorem \ref{the:PotFracDynAB}, theorem \ref{the:hiOrdAB}),
  for any $T>0$ and for any set $E$ for which
  \eqref{eq:AmBerIneq} holds, we have
\begin{equation}
  \label{eq:Obs2}
  \textstyle
  \|u_{0}\|^2_{L^2} \lesssim_{E,T,n,\alpha,V}
  \int_0^T \| e^{- i t H}u_{0}\|^2_{L^2(E^c)} dt
  \qquad\text{for all}\quad u_{0}\in L^{2}.
\end{equation}
\end{theorem}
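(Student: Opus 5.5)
We derive the observability inequality \eqref{eq:Obs2} directly from the two--time estimate \eqref{eq:AmBerIneq}, taking $F=E$. The argument uses only two facts: $e^{-itH}$ is unitary and commutes with time translations, and the constant in \eqref{eq:AmBerIneq} depends on $T$ only through the time gap.

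\textbf{Step 1 (time translation).} Fix $T>0$. For $s\in[0,T/2]$, apply \eqref{eq:AmBerIneq} to the solution $v(t)=u(t+s)=e^{-itH}u(s)$ with time gap $T/2$. We use the gap $T/2$, and not $T$, so that both observation times $s$ and $s+T/2$ lie in $[0,T]$. Since the constant $C=C(E,E,T/2,n,\alpha,V)$ does not depend on $s$, unitarity gives
\begin{equation*}
  \|u_0\|_{L^2}=\|v(0)\|_{L^2}\le C\bigl(\|u(s)\|_{L^2(E^c)}+\|u(s+T/2)\|_{L^2(E^c)}\bigr).
\end{equation*}

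\textbf{Step 2 (averaging).} Square this inequality, using $(a+b)^2\le 2a^2+2b^2$, and integrate in $s$ over $[0,T/2]$:
\begin{equation*}
  \tfrac{T}{2}\|u_0\|_{L^2}^2\le 2C^2\Bigl(\int_0^{T/2}\|u(s)\|^2_{L^2(E^c)}ds+\int_{T/2}^{T}\|u(s)\|^2_{L^2(E^c)}ds\Bigr)=2C^2\int_0^T\|e^{-isH}u_0\|^2_{L^2(E^c)}ds.
\end{equation*}
This is \eqref{eq:Obs2} with implicit constant $4C^2/T$.

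\textbf{Step 3 (hypotheses).} It remains to check that the pair $(E,E)$ with gap $T/2$ is admissible in each theorem.
\begin{itemize}
\item Theorem \ref{the:PotFracDynAB} requires $E$ compact.
\item Theorem \ref{the:hiOrdAB} requires $E$ of finite measure.
\item Theorem \ref{the:FracDynAB} with $\alpha<1$ requires $\mathcal{I}_\gamma(E,E)<\infty$.
\end{itemize}
All three conditions are symmetric in the two sets and do not involve the time gap, so they hold for $(E,E)$ whenever they hold for the given $E$.

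If the statement is instead read as assuming only that \eqref{eq:AmBerIneq} holds for the one value $T$, we apply it with gap $T$ and take $s$ in a shorter interval. Alternatively we note that \eqref{eq:AmBerIneq} was proved for every nonzero gap under the same set hypotheses.

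\textbf{Main difficulty.} The only delicate point is that the constant must be uniform in the base time $s$. This follows from the group property, since the translated function $u(\cdot+s)$ is again a solution with initial data $u(s)$.
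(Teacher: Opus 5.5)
Your proof is correct, and it is the standard way to pass from a two--time estimate to integrated observability. The paper gives no written proof of Theorem~\ref{the:Obs1}; it only calls it a simple application of \eqref{eq:AmBerIneq}. Your argument fills that in: use gap $T/2$, note the constant is uniform in the base time $s$ by the group property, then square and integrate over $s\in[0,T/2]$.

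One correction to your side remark. If \eqref{eq:AmBerIneq} were known only for the single gap $T$, shrinking the range of $s$ would not help. The times $s$ and $s+T$ both lie in $[0,T]$ only for $s=0$, so there is nothing to average over; you would only get observability on $[0,T']$ for $T'>T$. What makes the proof work is your other observation: the set hypotheses in Theorems~\ref{the:FracDynAB}, \ref{the:PotFracDynAB} and \ref{the:hiOrdAB} do not involve the gap, so \eqref{eq:AmBerIneq} is available with gap $T/2$.
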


Combining \eqref{eq:Obs2} with the
\emph{Hilbert uniqueness method}
\cite{Zuazua03-a}, one deduces by a standard
procedure the exact controllability property for $e^{-iTH}$.
This means that, given a set $E$ as above, for any $u_0,u_T \in
L^2(\mathbb{R}^n)$ we can always find $\nu \in
L^2(\mathbb{R}^n \times (0,T))$ such that the unique solution of
the Cauchy problem
\begin{equation}
  \label{eq:controleq}
  i \partial_t u = H u + \one{E^c} \nu,
  \qquad
  u(0)=u_{0}
\end{equation}
reaches the target state at time $T$ i.e. $u(T)=u_T$.


\section{The free fractional Schr\"odinger flow}
\label{sec:free_frac_flow}

Notice that when $H = (- \Delta)^{\alpha}$, the (backwards)
propagator $e^{i T H}$ at time $T \neq 0$ admits a natural
representation as an integral convolution operator,
\begin{equation}
  \label{eq:FracSchKer}
  e^{i T H} u_0
  =
  \mathcal{F} (e^{ i T |\cdot|^{2 \alpha}}) * u_0.
\end{equation}
We are thus led to study the Fourier transform of the
symbol $e^{i |\xi|^{a} }$.

\subsection{An oscillatory integral estimate}
\label{sub:a_four_tran}

\begin{proposition}[]
\label{pro:estoscilint}
  Let $n\ge1$, $a>1$ and $v(\xi)=e^{i|\xi|^{a}}$ on
  $\mathbb{R}^{n}$. Then $\widehat{v}$ is a function and
  \begin{equation*}
    |\widehat{v}(x)|\lesssim_{a,n}
    \begin{cases}
    1 &\text{if $ a\ge2 $,}\\
    \bra{x}^{\frac{n(2-a)}{2(a-1)}} &\text{if $ 1<a<2 $.}
    \end{cases}
  \end{equation*}
  In the range $1<a<2$ this power is sharp. More precisely, if
  \begin{equation*}
    \beta=\frac{n(2-a)}{2(a-1)},
    \qquad
    d_a=(a-1)a^{-\frac{a}{a-1}},
  \end{equation*}
  then there exists a constant $c_{a,n}\neq0$, depending also on
  the Fourier transform normalization, such that
  \begin{equation*}
    \widehat{v}(x)
    =
    c_{a,n}|x|^\beta e^{-id_a|x|^{\frac{a}{a-1}}}
    +
    o(|x|^\beta)
    \qquad
    \text{as } |x|\to+\infty.
  \end{equation*}
\end{proposition}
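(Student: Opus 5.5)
I will split $v$ with a smooth radial cutoff, $v=\chi v+(1-\chi)v$, where $\chi\in C_c^\infty$ equals $1$ near the origin. The piece $\chi v$ is compactly supported and bounded, so its Fourier transform is a bounded smooth function. The whole question is therefore about $w=(1-\chi)v$, whose transform is defined as a tempered distribution. I will show it is a locally bounded function with the stated growth.

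\emph{Dyadic decomposition.} Write $w=\sum_{j\ge0}\psi_j v$ with $\psi_j(\xi)=\psi(2^{-j}\xi)$ supported in $|\xi|\sim 2^j$, and study
\[
I_j(x)=\int e^{i(|\xi|^a+x\cdot\xi)}\psi(2^{-j}\xi)\,d\xi .
\]
The phase $\phi(\xi)=|\xi|^a+x\cdot\xi$ has gradient $a|\xi|^{a-2}\xi+x$. A stationary point exists only where $a|\xi|^{a-1}=|x|$, i.e.\ at $|\xi_0|=(|x|/a)^{1/(a-1)}$ with $\xi_0$ antiparallel to $x$. Let $j_0$ be the index with $2^{j_0}\sim |x|^{1/(a-1)}$.

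\emph{Nonstationary pieces.} For $|j-j_0|\ge C$ we have $|\nabla\phi|\gtrsim \max(2^{j(a-1)},|x|)$. Repeated integration by parts with the operator $L=\nabla\phi\cdot\nabla/(i|\nabla\phi|^2)$ costs a factor $(2^{j}\max(2^{j(a-1)},|x|))^{-1}$ per step, since derivatives of the symbol and of $\nabla\phi$ each gain $2^{-j}$. Against the volume $2^{jn}$ this gives
\[
|I_j|\lesssim 2^{jn}\bigl(2^{ja}+2^{j}|x|\bigr)^{-N}.
\]
The series over $j$ converges absolutely and uniformly because $a>1$ and $N$ is large. This proves that $\widehat{w}$ is a function, and the nonstationary contribution is $O(1)$ for large $|x|$ (and $O(1)$ in total for bounded $x$).

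\emph{Stationary pieces.} For the $O(1)$ indices with $|j-j_0|<C$, I rescale $\xi=2^{j}\eta$. This gives $I_j=2^{jn}\int e^{i\lambda\Phi(\eta)}\psi(\eta)\,d\eta$ with $\lambda=2^{ja}\sim|x|^{a/(a-1)}$ and $\Phi(\eta)=|\eta|^a+2^{j(1-a)}x\cdot\eta$. The Hessian of $|\eta|^a$ is nondegenerate on $|\eta|\sim1$, with eigenvalues $a(a-1)|\eta|^{a-2}$ and $a|\eta|^{a-2}$. Standard stationary phase, with bounds uniform since $2^{j(1-a)}x$ ranges over a compact set, then yields
\[
I_j=O\bigl(2^{jn}\lambda^{-n/2}\bigr)=O\bigl(|x|^{\frac{n}{a-1}-\frac{na}{2(a-1)}}\bigr)=O\bigl(|x|^{\frac{n(2-a)}{2(a-1)}}\bigr)=O(|x|^\beta).
\]
For $a\ge2$ this is $O(1)$, and for $1<a<2$ it is the claimed bound. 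Summing these pieces with the nonstationary ones gives the estimate.

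\emph{Asymptotics.} Here I expect the main bookkeeping, though not a real difficulty. Rather than handling the dyadic pieces separately, I will isolate the stationary point with a single cutoff $\rho(\xi/|\xi_0|)$ that equals $1$ near $|\eta|=1$. I apply the leading term of stationary phase at $\xi_0=-(|x|/a)^{1/(a-1)}x/|x|$. The critical value is
\[
\phi(\xi_0)=|\xi_0|^a-|x||\xi_0|=-(a-1)a^{-a/(a-1)}|x|^{a/(a-1)}=-d_a|x|^{a/(a-1)},
\]
which gives the phase. The amplitude is $(2\pi)^{n/2}|\det\nabla^2\phi(\xi_0)|^{-1/2}e^{i\pi\,\mathrm{sgn}/4}$. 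Here $\det\nabla^2\phi(\xi_0)=(a-1)a^n|\xi_0|^{n(a-2)}$ and the signature is $n$, so the amplitude equals $c_{a,n}|x|^{\beta}$ with $c_{a,n}\ne0$. The error in the stationary phase expansion is one power of $\lambda^{-1}$ smaller, and the remaining nonstationary parts are $O(1)$. Both are $o(|x|^\beta)$ when $\beta>0$, i.e.\ when $1<a<2$, which gives sharpness.

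The point needing care is the uniformity of the stationary phase constants after rescaling. This holds because $a|\eta|^{a-2}\eta$ is a diffeomorphism near $|\eta|=1$, so the rescaled critical point stays in a fixed compact set where the Hessian is uniformly nondegenerate.
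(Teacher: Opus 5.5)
Your proof is correct, but for the upper bound it follows a different route from the paper.

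\textbf{The paper's route.} The paper uses radial symmetry and writes $\widehat v$ as the Bessel integral $\int_0^\infty \widetilde J_\nu(r\rho)e^{i\rho^a}\rho^{n-1}\,d\rho$. It then:
\begin{enumerate}
\item inserts $e^{-\epsilon\rho^a}$ and integrates by parts in $\rho^a$ to get a crude bound $\bra{r}^k$, $k>\frac{n}{a-1}$, which shows $\widehat v$ is locally bounded;
\item for large $r$, truncates with $\chi_A$ and splits $\widetilde J_\nu(s)=s^{-\frac{n-1}{2}}(e^{is}a_+(s)+e^{-is}a_-(s))$;
\item disposes of the $e^{+is}$ wave by nonstationary integration by parts;
\item treats the $e^{-is}$ wave, which carries the stationary point $\rho_0=(r/a)^{1/(a-1)}$, by rescaling and the one-dimensional van der Corput lemma.
\end{enumerate}
Multidimensional stationary phase appears only in the sharpness part, and there only as a sketch.

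\textbf{Your route.} You never leave $\mathbb{R}^n$. You use a Littlewood--Paley decomposition in $\xi$, and multidimensional integration by parts for $|j-j_0|\ge C$. The $x$-independent majorant $2^{j(n-aN)}$ gives at once locally uniform convergence, continuity of $\widehat w$, and an $O(1)$ contribution. You then apply uniform stationary phase on the $O(1)$ pieces with $2^j\sim|x|^{1/(a-1)}$.

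\textbf{What each buys.} The paper needs only one-dimensional oscillatory estimates, but pays with Bessel asymptotics, radial symmetry and two separate limiting procedures. Your argument avoids special functions. It would work for any phase homogeneous of degree $a$ whose gradient map is a diffeomorphism with nondegenerate Hessian. It also makes the sharp asymptotics a direct continuation of the same computation. Your critical value $-d_a|x|^{a/(a-1)}$, determinant $(a-1)a^n|\xi_0|^{n(a-2)}$ and signature $n$ are all right, and they supply details the paper omits.

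\textbf{Two points to make explicit when writing it up.}
\begin{enumerate}
\item Quote the uniform stationary phase bound in a form whose constant depends on $\sup|\eta-\eta_0|/|\nabla\Phi(\eta)|$, such as H\"ormander's Theorem 7.7.5. For some $x$ the rescaled critical point lies near or outside $\operatorname{supp}\psi$, and your diffeomorphism remark gives exactly this control.
\item The partial sums of $\sum_j I_j$ are dominated by $C(1+|x|^{\max(\beta,0)})$. This is what identifies their pointwise limit with $\widehat w$ in $\mathscr{S}'$.
\end{enumerate}
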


\begin{proof}
  Since $e^{(i-\epsilon)|\xi|^a}\to e^{i|\xi|^{a}}$ in
  $\mathscr{S}'$ for $\epsilon \downarrow0$, using the standard
  representation of the Fourier transform of radial functions,
  letting $\nu=\frac n2-1$ we have as $\epsilon \downarrow0$
\begin{equation*}
  \textstyle
  I_{\epsilon}(r)=
  r^{-\nu}
  \int_{0}^{\infty}
  J_{\nu}(r \rho)e^{(i-\epsilon)\rho^{a}}\rho^{\frac n2}
  d \rho=
  \int_{0}^{\infty}
  \widetilde{J}_{\nu}(r \rho)e^{(i-\epsilon)\rho^{a}}
  \rho^{n-1} d \rho
  \to
  (2\pi)^{\frac n2}
  \widehat{v}
  \quad\text{in}\quad \mathscr{S}'.
\end{equation*}
  If we prove that $\bra{r}^{-m}|I_{\epsilon}(r)|\le C$
  uniformly in $\epsilon$, by weak--$*$ compactness in
  $L^{\infty}$ we deduce that $\widehat{v}$ is a function and
  $\bra{x}^{-m}|\widehat{v}(x)|\le C$ as well.

  Splitting
  $I_{\epsilon}=\int_{0}^{1}+\int_{1}^{\infty}=:I^{1}+I^{2}$ and
  recalling that $\widetilde{J}_{\nu}$ is bounded with all its
  derivatives when $\nu\ge -\frac 12$, we have obviously
  $|I^{1}|\le C$ uniformly in $\epsilon$. Changing variables in
  $I^{2}$ we get
\begin{equation*}
  \textstyle
  I^{2}(r)=\frac 1a\int_{1}^{\infty}
  \widetilde{J}_{\nu}(r \rho^{\frac 1a})
  e^{(i-\epsilon)\rho}
  \rho^{\frac na-1} d \rho.
\end{equation*}
  Writing $e^{(i-\epsilon)\rho}=(i-\epsilon)^{-k}
  \partial_{\rho}^{k}e^{(i-\epsilon)\rho}$ and integrating by
  parts $k$ times we get
\begin{equation*}
  \textstyle
  =(i-\epsilon)^{-k}p_{k}(r)+
  \frac{1}{a(\epsilon-i)^{k}}
  \int_{1}^{\infty}e^{(i-\epsilon)\rho}
  \partial_{\rho}^{k}
  \left(\widetilde{J}_{\nu}(r \rho^{\frac 1a})
  \rho^{\frac na-1}\right)d \rho
\end{equation*}
  for some polynomial $p_{k}$ of degree $k$ with coefficients
  depending on $n,a$. Expanding the derivatives inside the
  integral, the term with minimal decay in $\rho$ occurs when
  $k$ derivatives hit $\widetilde{J}_{\nu}$, giving
\begin{equation*}
  \widetilde{J}_{\nu}^{(k)}(r \rho^{\frac 1a})
  \rho^{\frac na-1+k(\frac 1a-1)}r^{k}.
\end{equation*}
  If $k>\frac{n}{a-1}$ all terms are integrable and we conclude
\begin{equation*}
  \textstyle
  |I_{\epsilon}(r)|\lesssim_{n,a}\bra{r}^{k},
  \qquad
  k>\frac{n}{a-1}.
\end{equation*}
  Hence $\widehat{v}$ is a locally bounded function satisfying
  $|\widehat{v}(x)|\lesssim \bra{x}^{k}$, $k>\frac{n}{a-1}$.
  This bound is sufficient for $|x|\le1$.

  For large $x$ we use a different approximation: we introduce a
  cutoff $\chi_{A}(\rho)=\chi(\rho/A)$ equal to 1 near the
  origin and we consider the integral
\begin{equation*}
  \textstyle
  I_{A}(r)=
  \int_{0}^{\infty}
  \widetilde{J}_{\nu}(r \rho)e^{i\rho^{a}}
  \chi_{A}(\rho) \rho^{n-1} d \rho.
\end{equation*}
  As before, it is sufficient to prove that
  $\bra{r}^{-m}|I_{A}(r)|\le C$ uniformly in $A$ to conclude
  that $\bra{x}^{-m}|\widehat{v}(x)|\le C$. We split the integral
  $I_{A}=\int_{0}^{1}+\int_{1}^{\infty}$; obviously, the first
  piece is bounded uniformly in $r\ge1,A>0$, and we can focus on
  the second one. We recall that for $\nu\ge-\frac 12$
\begin{equation*}
  \textstyle
  \widetilde{J}_{\nu}(s)=s^{-\frac{n-1}2}
  \left[
  e^{is}a_{+}(s)
  +
  e^{-is}a_{-}(s))
  \right]
\end{equation*}
  where $a_{\pm}(s)$ are bounded smooth functions
  on $s>0$ satisfying
\begin{equation}
  \label{eq:asyapm}
  \textstyle
  |a_{\pm}^{(k)}(s)|\lesssim_{\nu,k}s^{-k}
  \quad\text{for}\quad s\ge1,\ k\in \mathbb{N}_{0}.
\end{equation}
  Thus we can write
\begin{equation}
  \label{eq:decomp}
  \textstyle
  I_{A}(r)=\int_{0}^{1}+\int_{1}^{\infty}=
  \int_{0}^{1}+ r^{-\frac{n-1}{2}}(I_{+,A}+I_{-,A})
\end{equation}
  where
\begin{equation*}
  \textstyle
  I_{\pm,A}(r)=\int_{1}^{\infty}e^{i (\rho^{a}\pm r \rho)}
  a_{\pm}(r\rho)\rho^{\frac{n-1}{2}}
  \chi_{A}(\rho) d \rho.
\end{equation*}
  These two integrals have different phases. In $I_{+,A}(r)$ the
  phase $\phi(\rho)=\rho^{a}+r \rho$ satisfies $\phi'(\rho)=a
  \rho^{a-1}+r\ge r$ and we can integrate by parts an arbitrary
  number of times; note that the contribution of the boundary
  terms at $\rho=1$ is bounded uniformly in $r$. Writing $L
  \phi:=-\partial_{\rho}(\frac{\phi}{a \rho^{a-1}+r})$, we have
  for any $N\ge1$
\begin{equation*}
  \textstyle
  I_{+,A}(r)=
  c(a,n,N,r)+
  \int_{1}^{\infty}e^{i (\rho^{a}+r \rho)}
  L^{N}[
  a_{+}(r\rho)\rho^{\frac{n-1}{2}}
  \chi_{A}(\rho)] d \rho
\end{equation*}
  with $|c(a,n,N,r)|\lesssim_{n,N,a}1$. From \eqref{eq:asyapm}
  we have, uniformly in $r\ge1$, $A>0$,
\begin{equation}
  \label{eq:asyaprod}
  |\partial_{\rho}^{k}(a_{\pm}(r\rho)\chi_{A}(\rho)
  \rho^{\frac{n-1}{2}})|
  \lesssim_{\nu,k}\rho^{\frac{n-1}{2}-k}
  \quad\text{for}\quad \rho\ge1,\ k\in \mathbb{N}_{0}.
\end{equation}
  This implies
\begin{equation*}
  \textstyle
  |L^{N}[a_{+}(r\rho)\rho^{\frac{n-1}{2}}
  \chi_{A}(\rho)]|
  \lesssim_{n,N,a}
  r^{-N}\rho^{\frac{n-1}{2}-N}.
\end{equation*}
  Taking e.g.~$N=n+1$ we conclude
\begin{equation*}
  |I_{+,A}(r)|\lesssim_{n,a}1
  \quad\text{for}\quad r\ge1.
\end{equation*}
  Consider next
\begin{equation*}
  \textstyle
  I_{-,A}(r)=\int_{1}^{\infty}e^{i (\rho^{a}-r \rho)}
  a_{-}(r\rho)\rho^{\frac{n-1}{2}}
  \chi_{A}(\rho) d \rho.
\end{equation*}
  Now the phase has a unique, nondegenerate
  stationary point $\rho_{0}$:
\begin{equation*}
  \textstyle
  \phi(\rho)=\rho^{a}-r \rho
  \qquad
  \phi'=a \rho^{a-1}-r=0
  \qquad
  \rho_{0}=(\frac ra)^{\frac{1}{a-1}},
  \qquad
  \phi''(\rho_{0})= a(a-1)(\frac ra)^{\frac{a-2}{a-1}}
\end{equation*}
  Apply the change of variable $\rho= r^{\frac{1}{a-1}}
  u$ to get
\begin{equation*}
  \begin{aligned}
  I_{-,A}(r)
  &=
  r^{\frac{n+1}{2(a-1)}}
  \int_{b}^{\infty}e^{i M(u^{a}-u)}
  a_{-}(Mu)u^{\frac{n-1}{2}}\chi_{A'}(u) d u,\\
  b&=r^{-\frac{1}{a-1}},
  \qquad
  M=r^{\frac{a}{a-1}},
  \qquad
  A'=Ar^{-\frac{1}{a-1}} .
  \end{aligned}
\end{equation*}
  The new phase $\phi_{1}(u)=u^{a}-u$ has a nondegenerate
  critical point at $u_{0}=a^{-\frac{1}{a-1}}<1$, with
  $\phi''_{1}(u_{0})\gtrsim_{a}1$. Note also that $b\in(0,1]$.
  We split the integral as $\int_{b}^{2}+\int_{2}^{\infty}$. For
  the first piece we can use a standard van der Corput estimate:
\begin{equation*}
  \textstyle
  r^{\frac{n+1}{2(a-1)}}|\int_{b}^{2}|\lesssim_{n,a}
  r^{\frac{n+1}{2(a-1)}}M^{-\frac 12}=
  r^{\frac{n+1-a}{2(a-1)}}.
\end{equation*}
  For the $\int_{2}^{\infty}$ piece the phase is non stationary.
  As before, using sufficiently many integrations by parts and
  \eqref{eq:asyaprod}, we get
\begin{equation*}
  \textstyle
  r^{\frac{n+1}{2(a-1)}}|\int_{2}^{\infty}|\lesssim_{n,a}1.
\end{equation*}
  Recalling the external factor $r^{-\frac{n-1}{2}}$ in
  \eqref{eq:decomp} we obtain
\begin{equation*}
  |I_{A}(r)|\lesssim_{n,a}1+r^{-\frac{n-1}{2}}+
  r^{-\frac{n-1}{2}}r^{\frac{n+1-a}{2(a-1)}}\lesssim
  \bra{r}^{\frac{n(2-a)}{2(a-1)}}
\end{equation*}
  and this proves the upper bound.

  It remains only to record the sharpness statement when
  $1<a<2$. This is the standard stationary-phase expansion of
  the full $n$-dimensional oscillatory integral
\begin{equation*}
  \widehat{v}(x)=
  \int_{\mathbb{R}^n}e^{i|\xi|^a-ix\cdot\xi}\,d\xi
\end{equation*}
  up to the harmless normalization convention for the Fourier
  transform. For $x=r\omega$, $|\omega|=1$, the phase
  $|\xi|^a-x\cdot\xi$ has the unique
  nondegenerate critical point
\begin{equation*}
  \xi_0=\left(\frac{r}{a}\right)^{\frac{1}{a-1}}\omega,
  \qquad
  \phi(\xi_0)=
  - (a-1)a^{-\frac{a}{a-1}}r^{\frac{a}{a-1}}.
\end{equation*}
  After the scaling $\xi=r^{\frac{1}{a-1}}\eta$, stationary
  phase near $\xi_0$ gives
\begin{equation*}
  c_{a,n}r^{\frac{n(2-a)}{2(a-1)}}
  e^{-id_ar^{\frac{a}{a-1}}}
\end{equation*}
  with $c_{a,n}\neq0$. The complementary region is nonstationary
  and gives a lower-order contribution. This gives the stated
  asymptotic and concludes the proof.
\end{proof}

\subsection{Compactness of the localized propagator}
\label{sub:comp}

We can now prove a compactness result.

\begin{theorem}[]
\label{the:FracLapComp}
  Let $E,F \subset \mathbb{R}^{n}$ measurable sets of finite
  measure and $H=(-\Delta)^{\alpha}$. Assume that
  \begin{enumerate}[label=(\roman*)]
    \item
    either $\alpha\ge 1$
    \item
    or $\alpha\in(\frac 12,1)$ and \eqref{eq:MeasDecay2} holds.
  \end{enumerate}
  Then $\one{F}e^{-iTH}\one{E}$ is a compact operator on $L^{2}$
  for all $T\neq0$.
\end{theorem}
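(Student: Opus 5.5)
The plan is to show that $\one{F}e^{-iTH}\one{E}$ is Hilbert--Schmidt, since Hilbert--Schmidt operators are compact. By \eqref{eq:FracSchKer}, with $T$ replaced by $-T$, the propagator $e^{-iTH}$ is convolution with $K_T:=\mathcal{F}(e^{-iT|\cdot|^{2\alpha}})$ (up to a normalization constant). The localized operator therefore has integral kernel
\begin{equation*}
  k(x,y)=\one{F}(x)\,K_T(x-y)\,\one{E}(y),
\end{equation*}
and it suffices to prove $\|k\|_{L^2(\mathbb{R}^n\times\mathbb{R}^n)}<\infty$.

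The first step reduces $K_T$ to the model symbol treated in Proposition~\ref{pro:estoscilint}. For $T>0$ we write $T|\xi|^{2\alpha}=|T^{1/(2\alpha)}\xi|^{2\alpha}$, and change variables $\xi=T^{-1/(2\alpha)}\eta$. This gives
\begin{equation*}
  K_T(x)=T^{-\frac{n}{2\alpha}}\,\widehat{w}\bigl(T^{-\frac{1}{2\alpha}}x\bigr),
\end{equation*}
where $w=e^{-i|\cdot|^{2\alpha}}$. For $T<0$ the symbol is $e^{+i|T||\xi|^{2\alpha}}$, which is exactly $v$ with $a=2\alpha$. The case $e^{-i|\xi|^a}$ follows by complex conjugation, since $\widehat{\bar v}(x)=\overline{\widehat v(-x)}$. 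Because $\alpha>\frac12$ means $a=2\alpha>1$, Proposition~\ref{pro:estoscilint} applies. It shows that $K_T$ is a function and satisfies
\begin{equation*}
  |K_T(x)|\lesssim_{T,n,\alpha}
  \begin{cases}
    1, & \alpha\ge1,\\[2pt]
    \bra{x}^{\gamma}, & \tfrac12<\alpha<1,
  \end{cases}
\end{equation*}
where
\begin{equation*}
  \gamma=\frac{n(2-a)}{2(a-1)}=\frac{n(1-\alpha)}{2\alpha-1}.
\end{equation*}
The dilation by $T^{-1/(2\alpha)}$ only changes constants, because $\bra{cx}\le\max(1,c)\bra{x}$.

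The second step bounds the Hilbert--Schmidt norm.
\begin{itemize}
  \item In case (i) we get directly
  \begin{equation*}
    \|k\|_{L^2}^2\lesssim\int\!\!\int\one{F}(x)\one{E}(y)\,dx\,dy=|E||F|=\mathcal{I}_0(E,F)<\infty.
  \end{equation*}
  \item In case (ii) we use $\bra{x-y}^{2\gamma}\lesssim 1+|x-y|^{2\gamma}$, which gives
  \begin{equation*}
    \|k\|_{L^2}^2\lesssim |E||F|+\mathcal{I}_{\gamma}(E,F)<\infty
  \end{equation*}
  by finite measure and \eqref{eq:MeasDecay2}.
\end{itemize}
In both cases the operator is Hilbert--Schmidt, hence compact.

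The only delicate point is the first step. We must check that the distributional identity \eqref{eq:FracSchKer} really represents the operator by the pointwise kernel $K_T$ on $L^2$ functions supported in $E$. Such a function need not be integrable against a growing kernel, so this is not automatic. I would handle it by first testing on $\phi,\psi\in C_c^\infty$ with $\phi$ supported in $E$-type sets, where convolution of the tempered distribution $K_T$ with $\phi$ is classical and equals the locally bounded function $\int K_T(x-y)\phi(y)\,dy$. The identity $\one{F}e^{-iTH}\one{E}=$ the operator with kernel $k$ then holds on a dense set. Both sides are bounded on $L^2$: the left because it is unitary composed with projections, the right by the Hilbert--Schmidt bound. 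They therefore agree everywhere by density, and the argument closes.
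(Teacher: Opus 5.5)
Your proposal is correct and follows the paper's proof. You rescale the convolution kernel to the model symbol of Proposition~\ref{pro:estoscilint} with $a=2\alpha$, then show that the localized kernel lies in $L^2(\mathbb{R}^{2n})$, using $|E||F|<\infty$ when $\alpha\ge1$ and $|E||F|+\mathcal{I}_\gamma(E,F)<\infty$ when $\frac12<\alpha<1$, so the operator is Hilbert--Schmidt. The differences are minor: you treat $e^{-iTH}$ directly via complex conjugation instead of passing to the adjoint $\one{E}e^{iTH}\one{F}$, and you add a density argument identifying the operator with its kernel, a point the paper leaves implicit.
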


\begin{proof}
It is enough to prove the compactness of the adjoint operator
$\one{E} e^{i T H} \one{F}$. By formula \eqref{eq:FracSchKer}
and scaling properties of the Fourier transform, $\one{E} e^{i T
H} \one{F}$ can be regarded as an integral operator of kernel
\begin{equation}
  \label{eq:locFracKer}
  \begin{aligned}
  \one{E} e^{i T H} \one{F} (x,y)
  &=
  T^{- \frac{n}{2 \alpha}} \one{E}(x)
  \mathcal{F} (e^{i |\cdot|^{2 \alpha}})
  \bigl(T^{-\frac{1}{2\alpha}} |x-y|\bigr)
  \one{F}(y),
  \end{aligned}
\end{equation}
for $x,y \in \mathbb{R}^{n}$. Applying the estimate of
Proposition \ref{pro:estoscilint} with $a=2\alpha$, the kernel
is bounded when $\alpha\ge1$, and thus square integrable, being supported in the cartesian
product of the finite measure sets $E,F \subset \mathbb{R}^n$. If $\frac12<\alpha<1$, the square of the kernel is
controlled by a constant times
\begin{equation*}
  \one{E}(x)(1+|x-y|^{2\gamma})\one{F}(y),
  \qquad
  \gamma=\frac{n(1-\alpha)}{2\alpha-1},
\end{equation*}
which is integrable by finiteness of the measures of $E$ and $F$ and of the interaction energy $I_{\gamma}(E,F)$, i.e. assumption \eqref{eq:MeasDecay2}.
Thus \eqref{eq:locFracKer} is in $L^2(\mathbb{R}^{2n})$ in both
cases, so $\one{E} e^{i T H} \one{F}$ is Hilbert-Schmidt,
and therefore compact.
\end{proof}

\section{Perturbed fractional one dimensional Schr\"odinger flow}
\label{sec:one_dim_flow}

\subsection{The Jost Functions}
\label{sub:jost_funcs}

We begin the section by recalling the Jost estimates
used below; see
\cite{DeiftTrubowitz79-a, DAnconaFanelli06-a, Mizutani11-a} 
for the underlying one dimensional scattering theory.

If $V \in L^1_N$ for $N \geq 2$ and $0 \leq k \leq N-1$,
\begin{equation}
\label{eq:JostEst1}
| \partial^{k}_{\lambda} f_{\pm}(\lambda,x) | \lesssim \bra{x}^k (1 + \max(\pm x,0)).
\end{equation}
Moreover, if $W(0)=0$, such estimate can be improved by
\begin{equation}
\label{eq:JostEst2}
| \partial^{k}_{\lambda} f_{\pm}(\lambda,x) | \lesssim \bra{x}^k.
\end{equation}

We also recall the equality
\begin{equation}
\label{eq:wronskTransm}
\frac{1}{W(\lambda)} = - \frac{\Theta(\lambda)}{2 i \lambda},
\end{equation}
valid for $V$ at least in $L^1_1$, where $\Theta(\lambda)$ is the so called \textit{transmission} \textit{coefficient}, which is bounded as a function of $\lambda$. 
Moreover, under the additional assumption $V \in L^1_N$, $\Theta(\cdot) \in C^{N-1}(\mathbb{R})$ and one has following bound on its derivatives, 
\begin{equation}
\label{eq:TransmDer}
\partial^{k}_{\lambda} \Theta(\lambda) \lesssim \lambda^{-1}, \qquad 1 \le k \le N-1.
\end{equation}

If $V \in L^1_N(\mathbb{R})$, $W(0) \neq 0$, for $0 \leq k \leq N-1$ one has
\begin{equation}
\label{eq:JostEstGeneric}
| \partial^{k}_{\lambda} (\Theta(\lambda) f_{\pm}(\lambda,x)) | \lesssim \bra{x}^k, \qquad \lambda \neq 0.
\end{equation}

If $V \in L^1_N$, $N \geq 2$, $W(0)=0$, then $\Theta \in C^{N-2}(\mathbb{R})$ and \eqref{eq:TransmDer} holds for $1 \leq k \leq N-2$.

\subsection{Fourier Estimate for the Jost Resolvent Kernel}

Define
\begin{equation}
\label{eq:ArgFourJost}
g_{1,x,y}(\mu) = \one{[0,+\infty)}(\mu) \mu^{\frac{1}{\alpha} - 1} \one{(x,+\infty)}(y) [\frac{f_{+}(\mu^{\frac{1}{2 \alpha}},y) f_{-}(\mu^{\frac{1}{2 \alpha}},x)}{W(\mu^{\frac{1}{2 \alpha}})} ],
\end{equation}
\begin{equation}
\label{eq:ArgFourJost2}
g_{2,x,y}(\mu) = - \one{[0,+\infty)}(\mu) \mu^{\frac{1}{\alpha} - 1} \one{(-\infty,x)}(y) [ \frac{f_{+}(- \mu^{\frac{1}{2 \alpha}},y) f_{-}(-\mu^{\frac{1}{2 \alpha}},x)}{W(-\mu^{\frac{1}{2 \alpha}})}].
\end{equation}
Consider a smooth cut-off $\one{[-1,1]} \le \chi_{1} \le \one{[-2,2]}$, and $\chi_{A} (\mu) = \chi(\mu / A)$.
Define also
\begin{equation}
\label{eq:FourApprox}
g_{i,x,y,A} = \chi_A g_{i,x,y}, \qquad i=1,2.
\end{equation}

\begin{proposition}
\label{prop:FourBoundJost}

For $V$, $N$ as in \textbf{Assumption (A)} (resp. \textbf{Assumption (B)}), $\widehat{g_{i,x,y}}(T)$ is a continuous function for $T \neq 0$ and $\widehat{g_{i,x,y,A}} \rightarrow \widehat{g_{i,x,y}}$ uniformly in $\mathbb{R} \setminus (- \delta, \delta)$, $i =1,2$. Moreover, for any $T \neq 0$
\begin{equation}
\label{eq:FourEstWneq0}
|  \widehat{g_{i,x,y}}(T) | + |  \widehat{g_{i,x,y,A}}(T) | \lesssim_{N,\alpha} (1+\max(\pm x,0))(1+\max(\pm y,0)) (
1+ \frac{\bra{x}^{N-1} \bra{y}^{N-1}}{T^{N-1}}),
\end{equation}
for $V,N$ as in \textbf{Assumption (A)}, $i=1,2$;
\begin{equation}
\label{eq:FourEstWeq0}
| \widehat{g_{i,x,y}}(T) | + |  \widehat{g_{i,x,y,A}}(T) | \lesssim_{N,\alpha} 1 + \frac{\bra{x}^{N-2} \bra{y}^{N-2}}{T^{N-2}},
\end{equation}
for $V,N$ as in \textbf{Assumption (B)}, $i=1,2$.

\end{proposition}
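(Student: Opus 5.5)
We estimate $\widehat{g}$ by integrating by parts in $\mu$: each integration gains a factor $T^{-1}$ and costs one derivative on $g$. We balance two regimes, small $\mu$ (where $g$ is singular) and large $\mu$ (where $g$ may not decay).

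\textbf{Step 1: the variable $\lambda$.} Write $\lambda=\mu^{\frac1{2\alpha}}$, so $\mu=\lambda^{2\alpha}$ and $d\mu=2\alpha\lambda^{2\alpha-1}d\lambda$. Since $\mu^{\frac1\alpha-1}d\mu=2\alpha\lambda^{1}d\lambda$, we get
\begin{equation*}
\widehat{g_{1,x,y}}(T)=2\alpha\int_0^\infty e^{-iT\lambda^{2\alpha}}\,\lambda\,\frac{f_+(\lambda,y)f_-(\lambda,x)}{W(\lambda)}\,d\lambda ,
\end{equation*}
with $y>x$, and similarly for $g_2$ with $\lambda\mapsto-\lambda$. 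By \eqref{eq:wronskTransm}, $\lambda/W(\lambda)=-\Theta(\lambda)/(2i)$, so the amplitude is
\begin{equation*}
m(\lambda)=\Theta(\lambda)f_+(\lambda,y)f_-(\lambda,x),
\end{equation*}
which is bounded with no singularity at $\lambda=0$. Under \textbf{(A)} we use \eqref{eq:JostEst1}, \eqref{eq:TransmDer} and \eqref{eq:JostEstGeneric} to get
\begin{equation*}
|\partial_\lambda^k m|\lesssim(1+\max(\pm x,0))(1+\max(\pm y,0))\bra{x}^k\bra{y}^k,\qquad k\le N-1 .
\end{equation*}
The factors $\lambda^{-1}$ from \eqref{eq:TransmDer} are harmless away from $0$. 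Near $0$ we rely on $\Theta\in C^{N-1}$. Under \textbf{(B)} we use \eqref{eq:JostEst2} instead, which removes the linear factors, with $k\le N-2$. To justify the limits we insert $\chi_A(\lambda^{2\alpha})$; it contributes derivatives bounded uniformly in $A$ on $\lambda\ge1$, which gives continuity and the uniform convergence as $A\to\infty$ once the integrals converge absolutely after integration by parts.

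\textbf{Step 2: splitting and integration by parts.} Split with a smooth cutoff at $\lambda\sim\lambda_0$.

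On $[0,\lambda_0]$ we bound the integral trivially by $\lambda_0\sup|m|$, and we simply take $\lambda_0=1$.

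On $[1,\infty)$ the phase $\phi=T\lambda^{2\alpha}$ has $\phi'=2\alpha T\lambda^{2\alpha-1}$, which is nonvanishing. We apply $L=\partial_\lambda\bigl(\cdot/(i\phi')\bigr)$ $k$ times. A typical term is $T^{-k}\lambda^{-k(2\alpha-1)-j}\partial^{k-j}m$, and it is integrable on $[1,\infty)$ as soon as $k(2\alpha-1)>1$. The boundary terms at $\lambda=1$ are bounded by $T^{-j}\bra{x}^{j}\bra{y}^{j}$ times the linear factors. Altogether this yields bounds of the form $\sum_{j\le k}(\bra{x}\bra{y}/T)^{j}$, and these are dominated by $1+(\bra{x}\bra{y}/T)^{k}$.

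\textbf{Step 3: choice of $k$.} We choose $k=N-1$ under \textbf{(A)} and $k=N-2$ under \textbf{(B)}. The integrability condition then needs $N-1>\frac{1}{2\alpha-1}$, respectively $N-2>\frac1{2\alpha-1}$, and both are implied by the hypotheses $N>\frac{4\alpha-1}{2\alpha-1}$ and $N>\frac{6\alpha-2}{2\alpha-1}$. The stronger thresholds leave spare decay, which I expect to be needed to control the terms where $\Theta'$ brings the $\lambda^{-1}$ factor near $\lambda=1$. They also cover the degraded regularity of $\Theta$, which is only $C^{N-2}$ in the resonant case.

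\textbf{Main obstacle.} The hardest step is checking that the amplitude really has $k$ derivatives with the stated polynomial growth uniformly down to $\lambda\to0^+$. In the resonant case the product $\Theta f_\pm$ must be handled without \eqref{eq:JostEstGeneric}. If that fails, I would move the integration by parts to the region $\lambda\ge1$ only, as above, so that near zero only boundedness of $m$ is required. This needs $|\Theta|\le1$ and \eqref{eq:JostEst2}.
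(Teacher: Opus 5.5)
Your proposal is correct and follows essentially the paper's route. You make a smooth low/high energy split, bound the part near zero trivially, and integrate by parts $N-1$ (resp.\ $N-2$) times at high energy using the Jost derivative bounds, with $\chi_A$ controlled uniformly in $A$. Integrating by parts in $\lambda$ against the phase $T\lambda^{2\alpha}$ is the same operation as the paper's differentiation in $\mu$ via $\widehat{f^{(M)}}=(iT)^M\widehat{f}$.

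Two small differences favour your version. Absorbing $\lambda/W=-\Theta/(2i)$ up front makes the low-energy bound under (B) immediate from $|\Theta|\lesssim 1$, with no appeal to the Fourier $L^1$ fact the paper cites. Your count $k(2\alpha-1)>1$ is one derivative sharper than the paper's, because the paper discards a factor $\mu^{\frac{1}{2\alpha}-1}$. So the thresholds in (A) and (B) do have room to spare, as you suspected, though not for the reasons you guessed.
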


\begin{proof}
We prove the proposition for $g_{1,x,y}$, $g_{1,x,y,A}$. The proof for $g_{2,x,y}$, $g_{2,x,y,A}$ is analogous.
We write
\begin{equation*}
g_{1,x,y} = \chi_1 g_{1,x,y} + (1 - \chi_1) g_{1,x,y}, \qquad g_{1,x,y,A}= \chi_1 g_{1,x,y} + (1 - \chi_1) \chi_A g_{1,x,y}.
\end{equation*}

Let us first focus on the low energy part $\chi_1 g_{1,x,y}$. We claim that it is an $L^1$ function, thus its Fourier transform is continuous and bounded.

Indeed, under \textbf{Assumption (A)}, making use of \eqref{eq:JostEst1} we can estimate:
\begin{align*}
\lVert \widehat{\chi_1 g_{1,x,y}} \rVert_{\infty} 
& \leq \lVert \one{(x,+\infty)}(y) [\frac{f_{+}(\mu^{\frac{1}{2 \alpha}},y) f_{-}(\mu^{\frac{1}{2 \alpha}},x)}{W(\mu^{\frac{1}{2 \alpha}})} ] \rVert_{L_{\mu}^{\infty}([0,2])} \lVert \mu^{\frac{1}{\alpha}-1} \rVert_{L_{\mu}^1([0,2])}
\\ &
\lesssim_{\alpha} (1 + \max(\pm x, 0))(1 + \max(\pm y, 0)).
\end{align*}
Notice that non-vanishing and regularity properties of the Wronskian imply that $\frac{1}{W(\mu^{\frac{1}{2\alpha}})}$ is bounded for $\mu \in [0,2]$.

On the other hand, under assumption \textbf{Assumption (B)}, 
we recall that (see \cite{Mizutani11-a} and 
\cite{DAnconaFanelli06-a})
\begin{equation*}
\mathcal{F}(\frac{\lambda \chi(\lambda)}{W(\lambda))}) \in L^1(\mathbb{R}),
\end{equation*}
for any smooth cut-off function $\chi$. This implies 
\begin{equation*}
\lVert \frac{\mu^{\frac{1}{2 \alpha}}}{W(\mu^{\frac{1}{2 \alpha}})} \rVert_{L^\infty([0,2])} < + \infty,
\end{equation*}
thus we can estimate 
\begin{equation*}
\lVert \widehat{\chi_1 g_{1,x,y}} \rVert_{\infty}
\leq \lVert \one{(x,+\infty)}(y) \mu^{\frac{1}{2 \alpha}} [ \frac{f_{+}(\mu^{\frac{1}{2 \alpha}},y) f_{-}(\mu^{\frac{1}{2 \alpha}},x)}{W(\mu^{\frac{1}{2 \alpha}})} ] \rVert_{L_{\mu}^\infty([0,2])} \lVert \mu^{\frac{1}{2 \alpha}-1} \rVert_{L_{\mu}^1([0,2])}
\lesssim_{\alpha} 1.
\end{equation*}
Now we turn our attention to the high energy parts $(1-\chi_1) g_{1,x,y}$, $(1-\chi_1)\chi_A g_{1,x,y}$. We use the standard property
\begin{equation}
\label{eq:FourTransfDeriv}
i^{M} T^M \widehat{f}(T) = \widehat{f^{(M)}}(T), \qquad M \in \mathbb{N}.
\end{equation}
 
We want to study $[(1- \chi_1) g_{1,x,y}]^{(N-1)}$ and $[(1- \chi_1) g_{1,x,y,A}]^{(N-1)}$ for $N$ as in \textbf{Assumption (A)} (resp. $[(1- \chi_1) g_{1,x,y}]^{(N-2)}$ and $[(1- \chi_1) g_{1,x,y,A}]^{(N-2)}$ for $N$ as in \textbf{Assumption (B)}).
By the Leibniz rule
\begin{equation*}
[(1-\chi_1) g_{1,x,y}]^{(N-1)} = G + R, \qquad [(1-\chi_1) \chi_A g_{1,x,y}]^{(N-1)} = G^A + R^A,
\end{equation*}
where $R$ and $R^A $ group terms with derivatives hitting $(1 - \chi_1$) at least once, and are thus supported in $[1,2]$, while 
\begin{equation}
G = (1 - \chi_1) g_{1,x,y}^{(N-1)}, \qquad G^A = (1 - \chi_1) \sum^{N-1}_{j=0} \chi^{(j)}_A g_{1,x,y}^{(N-1-j)}
\end{equation} 
It is clear that
\begin{equation*}
\lVert R \rVert_{1} + \lVert R^A \rVert_{1} \lesssim_{N,\alpha} (1+\max(\pm x,0))(1+\max(\pm y,0)) \bra{x}^{N-1} \bra{y}^{N-1},
\end{equation*}
uniformly for $A > 2$.

First of all, notice that terms of the form $\chi^{(j)}_A = \chi^{(j)}_{1}(\mu / A) A^{-j}$ enjoy the property $\mu^{j} \chi^{(j)}_A \in L^{\infty}$ uniformly for $A > 2$. 

Let us then study terms of the form $g_{1,x,y}^{(N-1-j)}$, $j = 0, \ldots, N-1$. Making use of \eqref{eq:JostEst1}, \eqref{eq:TransmDer}, one can see that
\begin{align*}
& | [ \Theta(\mu^{\frac{1}{2 \alpha}}) f_{+}(\mu^{\frac{1}{2 \alpha}},y)f_{-}(\mu^{\frac{1}{2 \alpha}},x)) \mu^{\frac{1}{2 \alpha}-1}]^{(N-1-j)} | 
\\ & 
\lesssim_{j,\alpha} \bra{x}^{N-1-j}(1+\max(\pm x,0))) \bra{y}^{N-1-j} (1 + \max (\pm y , 0 ) ) \mu^{(N-1-j)(\frac{1}{2 \alpha} - 1)}.
\end{align*} 
If $N$ is as in \textbf{Assumption (A)}, it is large enough such that
\begin{equation*}
\lVert G \rVert_{1} + \lVert G^A \rVert_{1} \lesssim_{N,\alpha} \bra{x}^{N-1}(1+\max(\pm x,0))) \bra{y}^{N-1} (1 + \max (\pm y , 0 ) ),
\end{equation*}
uniformly for $A > 2$.

Making use of \eqref{eq:FourTransfDeriv}, one has
\begin{align*}
|\widehat{(1-\chi_1 )g_{1,x,y} } (T)| + |\widehat{(1 - \chi_1(\mu))g_{1,x,y,A} } (T)|  
\\
\lesssim_{N,\alpha} \frac{\bra{x}^{N-1}(1+\max(\pm x,0))) \bra{y}^{N-1} (1 + \max (\pm y , 0 ) )}{T^{N-1}}.
\end{align*}

We note that 
$[(1-\chi_1) g_{1,x,y}]^{(N-1)}$ is in $L^{1}$
for $N$ as above, hence 
$[(1-\chi_1)\chi_{A} g_{1,x,y}]^{(N-1)}\to 
  [(1-\chi_1) g_{1,x,y}]^{(N-1)}$
in $L^{1}$ as $A\to \infty$, hence 
$T^{N-1}\mathcal{F}((1-\chi_1) \chi_{A}g_{1,x,y})\to
  T^{N-1}\mathcal{F}((1-\chi_1) g_{1,x,y})$
uniformly on $\mathbb{R}$.
It follows that 
$\mathcal{F}((1-\chi_1) \chi_{A}g_{1,x,y})\to
  \mathcal{F}((1-\chi_1) g_{1,x,y})$
uniformly on $\mathbb{R}\setminus(-\delta,\delta)$ for all
$\delta>0$.
Summing back the continuous function
$\mathcal{F}(\chi_1 g_{1,x,y})$, we conclude that
$\widehat{g_{1,x,y,A}}\to \widehat{g_{1,x,y}}$
uniformly on $\mathbb{R}\setminus(-\delta,\delta)$ for all
$\delta>0$ as $A\to \infty$.

Collecting the estimates for the low and high energy parts, it is clear that \eqref{eq:FourEstWneq0} holds.

One can reason in the same fashion under \textbf{Assumption (B)}, making use of the regularity results for $\Theta$ and $f_{\pm}$ stated in the last part of subsection \ref{sub:jost_funcs} and invoking \eqref{eq:JostEst2}.
\end{proof}

\subsection{Compactness of the localized propagator}
\label{sub:one_dim_comp}

We first record explicitly how the Stone formula enters the
argument. Since we work with $U_{\alpha}(t)$, the pure point
spectrum of $h$ does not contribute to the representation below.

\begin{lemma}[Stone--Jost representation]
\label{lem:StoneJost}
Let $t\neq0$ and let $E,F\subset\mathbb{R}$ be measurable sets.
For $\phi\in C^\infty_c(\mathbb{R})$ one has
\begin{equation}
  \label{eq:StoneJostKernel}
  \one{F}e^{-ith^{\alpha}}(t)\one{E}\phi(x)
  =
  c_\alpha\sum_{j=1}^2
  \int_{\mathbb{R}}K_j(t;x,y)\phi(y)\,dy,
\end{equation}
where
\begin{equation}
  \label{eq:StoneJostKernels}
  K_j(t;x,y)
  =
  \one{F}(x)\one{E}(y)
  \mathcal{F}_{\mu\rightarrow t}
  \bigl(g_{j,x,y}\bigr)(t),
  \qquad j=1,2,
\end{equation}
up to harmless constants depending only on the normalization of
the Fourier transform.
\end{lemma}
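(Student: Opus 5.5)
The plan is to start from the spectral theorem for $h$ and the Stone formula. Since $V\ge0$, $h$ has purely absolutely continuous spectrum $[0,\infty)$, so for $\phi,\psi\in C^\infty_c(\mathbb{R})$
\begin{equation*}
  \langle e^{-ith^{\alpha}}\phi,\psi\rangle
  =
  \lim_{\epsilon\downarrow0}\frac{1}{2\pi i}\int_0^\infty
  e^{-it\lambda^{\alpha}}
  \langle[R(\lambda+i\epsilon)-R(\lambda-i\epsilon)]\phi,\psi\rangle\,d\lambda .
\end{equation*}
I will first insert the smooth cutoff $\chi_A(\lambda^\alpha)$. This makes the $\lambda$-integral absolutely convergent, and the resulting operators $\chi_A(h^\alpha)e^{-ith^\alpha}$ converge strongly to $e^{-ith^\alpha}$ as $A\to\infty$.

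Next I will use the Jost representation of the resolvent kernel. For $\lambda=k^2$, $k>0$, the kernel is
\begin{equation*}
  R(k^2\pm i0)(x,y)=-\frac{f_+(\pm k,x\vee y)f_-(\pm k,x\wedge y)}{W(\pm k)}.
\end{equation*}
The jump $R(k^2+i0)-R(k^2-i0)$ is therefore the difference of the $+k$ and $-k$ expressions, which accounts for the two kernels $j=1,2$ and the opposite signs in \eqref{eq:ArgFourJost}, \eqref{eq:ArgFourJost2}. The orientation factors $\one{(x,+\infty)}(y)$ and $\one{(-\infty,x)}(y)$ select which of $x,y$ is the larger argument.

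I will then change variables $\lambda^\alpha=\mu$, so that $\lambda=\mu^{1/\alpha}$ and $k=\mu^{1/(2\alpha)}$. The Jacobian is $d\lambda=\alpha^{-1}\mu^{\frac1\alpha-1}d\mu$, which produces the weight $\mu^{\frac1\alpha-1}$ and the constant $c_\alpha$. The $\mu$-integral of $e^{-it\mu}\chi_A(\mu)g_{j,x,y}(\mu)$ is exactly $\widehat{g_{j,x,y,A}}(t)$, up to normalization.

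For fixed $A$ the integrand is integrable in $(\mu,x,y)$ on the compact supports of $\phi$ and $\psi$, because \eqref{eq:JostEst1} and boundedness of $1/W$ away from $0$ give local bounds. Hence Fubini lets me move the $\mu$-integral inside and obtain a kernel representation with $\widehat{g_{j,x,y,A}}(t)$.

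The main obstacle is passing to the limit $A\to\infty$, and also the $\epsilon\downarrow0$ limit near $\mu=0$. For this I will invoke Proposition \ref{prop:FourBoundJost}. It gives uniform convergence $\widehat{g_{j,x,y,A}}(t)\to\widehat{g_{j,x,y}}(t)$ for $t\neq0$, together with the bounds \eqref{eq:FourEstWneq0} or \eqref{eq:FourEstWeq0}, which are uniform in $A$ and polynomial in $x,y$. These bounds are locally bounded on $\operatorname{spt}\phi\times\operatorname{spt}\psi$, so dominated convergence yields the kernel identity tested against $\psi$.

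Multiplying by $\one{F}(x)\one{E}(y)$ and letting $\psi$ range over a dense class gives \eqref{eq:StoneJostKernel} pointwise a.e. in $x$. The low-energy limit is handled by the $L^1$ bound on $\chi_1 g_{j,x,y}$ from the same proposition. Under Assumption (B) this uses the fact that $\mu^{1/(2\alpha)}/W(\mu^{1/(2\alpha)})$ stays bounded.
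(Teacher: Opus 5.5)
Your proposal follows the paper's proof: Stone's formula, the Jost form of the boundary values of the resolvent, the substitution $\mu=\lambda^{\alpha}$ producing the weight $\mu^{\frac1\alpha-1}$, a regularization to justify Fubini, and then Proposition~\ref{prop:FourBoundJost} with dominated convergence. The one difference is that you regularize with $\chi_A(\mu)$ instead of the paper's factor $e^{-\varepsilon\mu}$, which fits better, since Proposition~\ref{prop:FourBoundJost} supplies the $A$-uniform bounds and the convergence $\widehat{g_{j,x,y,A}}\to\widehat{g_{j,x,y}}$ exactly for these truncations, whereas the paper's $e^{-\varepsilon\mu}$ is not literally covered by it. Like the paper, you pass quickly over the fact that the jump $R(\lambda+i0)-R(\lambda-i0)$ contains both orientations $y\gtrless x$ for each sign of $k$, always with $f_+$ evaluated at $x\vee y$, so it does not literally reduce to the two pieces $g_1,g_2$ as written; this is harmless, since the bounds of Proposition~\ref{prop:FourBoundJost} carry over to the remaining pieces by the symmetry $x\leftrightarrow y$ and $f_\pm(-k,\cdot)=\overline{f_\pm(k,\cdot)}$.
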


\begin{proof}
Since the operator has only absolutely continuous spectrum,
Stone's formula gives
\begin{equation}
  \label{eq:1dStone}
  e^{-ith^{\alpha}}
  =
  \frac{1}{2\pi i}\int_0^\infty
  e^{-it\lambda^\alpha}
  \bigl(R_V(\lambda+i0)-R_V(\lambda-i0)\bigr)\,d\lambda
\end{equation}
in the strong sense. The one dimensional limiting resolvents
have the usual Jost representation; for instance, one of the two
pieces has kernel
\begin{equation}
  \label{eq:resJost1}
  \one{(x,+\infty)}(y)
  \frac{f_+(\sqrt{\lambda},y)f_-(\sqrt{\lambda},x)}
  {W(\sqrt{\lambda})},
\end{equation}
and the conjugate boundary value gives the analogous term with
$-\sqrt{\lambda}$ and $\one{(-\infty,x)}(y)$. After the change
of variables $\mu=\lambda^\alpha$, these two pieces are
precisely the functions $g_{1,x,y}$ and
$g_{2,x,y}$ defined in
\eqref{eq:ArgFourJost}--\eqref{eq:ArgFourJost2}. To justify the
exchange of the $\mu$ and $y$ integrals, insert first a factor
$e^{-\varepsilon\mu}$ in \eqref{eq:1dStone}; the resulting
integrals are absolutely convergent. The estimates in
Proposition \ref{prop:FourBoundJost}, followed by dominated
convergence as $\varepsilon\downarrow0$,
give \eqref{eq:StoneJostKernel}.
\end{proof}

\begin{theorem}
\label{the:1dComp}
  Let $V$ satisfy Assumption \textbf{(A)}. For any $T \neq 0$
  and any finite measure sets $E,F\subset \mathbb{R}$ such that
  \begin{equation}
    \label{eq:MeasDecayJostA}
    \int_{\mathbb{R} \times \mathbb{R}}
    \one{F}(x) \bra{x}^{N-1}(1+\max(\pm x,0)) \bra{y}^{N-1} (1 + \max (\pm y , 0 ))
    \one{E}(y) \, dx dy
    < + \infty,
  \end{equation}
  the operator $\one{F}e^{-iTh^{\alpha}}\one{E}$ is Hilbert--Schmidt,
  and hence compact.

  The same conclusion holds if $V$ satisfies Assumption
  \textbf{(B)}, for any $T \neq 0$ and any finite measure 
  sets $E,F \subset \mathbb{R}$ such that
  \begin{equation}
    \label{eq:MeasDecayJostB}
    \int_{\mathbb{R} \times \mathbb{R}}
    \one{F}(x) \bra{x}^{N-2} \bra{y}^{N-2}
    \one{E}(y) \, dx dy
    < + \infty.
  \end{equation}
\end{theorem}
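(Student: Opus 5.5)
The plan is to realise $\one{F}e^{-iTh^{\alpha}}\one{E}$ as an integral operator through Lemma \ref{lem:StoneJost}, and then show that its kernel lies in $L^2(\mathbb{R}\times\mathbb{R})$. First, for $\phi\in C^\infty_c$, \eqref{eq:StoneJostKernel} gives the action of the operator through the kernels $K_1+K_2$ of \eqref{eq:StoneJostKernels}, with $K_j(T;x,y)=\one{F}(x)\one{E}(y)\widehat{g_{j,x,y}}(T)$. By Proposition \ref{prop:FourBoundJost}, $\widehat{g_{j,x,y}}(T)$ is continuous for $T\neq0$ and is the uniform limit of $\widehat{g_{j,x,y,A}}(T)$, so these kernels are measurable functions. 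Once we know $K_j\in L^2$, the integral operator it defines is bounded and agrees with $\one{F}e^{-iTh^\alpha}\one{E}$ on the dense set $C^\infty_c$, hence everywhere. Hilbert--Schmidt, and so compactness, then follows. Everything therefore reduces to checking $\iint|K_j|^2\,dx\,dy<\infty$.

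Write $w_\pm(x)=1+\max(\pm x,0)\le 2\bra{x}$. I will not square \eqref{eq:FourEstWneq0} directly, since that would produce the weight $w^2\bra{x}^{2(N-1)}$. Instead I split $\widehat{g_{1,x,y}}(T)$ as in the proof of Proposition \ref{prop:FourBoundJost} into a low energy part and a high energy part, and square each piece separately.

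\emph{Low energy part.} This piece is bounded by $w(x)w(y)$. Its square is $w(x)^2w(y)^2\lesssim \bra{x}^{N-1}w(x)\,\bra{y}^{N-1}w(y)$, because $N\ge2$. So its contribution is controlled by \eqref{eq:MeasDecayJostA}.

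\emph{High energy part.} Here I rerun the integration by parts \eqref{eq:FourTransfDeriv} with a number $M$ of derivatives chosen freely rather than $M=N-1$. The Leibniz and Jost estimates \eqref{eq:JostEst1}, \eqref{eq:TransmDer} give
\begin{equation*}
  |\widehat{(1-\chi_1)g_{1,x,y}}(T)|
  \lesssim_{M,\alpha}
  |T|^{-M}\,w(x)\bra{x}^{M}\,w(y)\bra{y}^{M}.
\end{equation*}
This holds for every integer $M\le N-1$ for which the top order term is integrable at $\mu=\infty$. That integrability requirement reads $(M+1)(1-\frac1{2\alpha})>1$, i.e.\ $M>\frac1{2\alpha-1}$. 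Squaring gives the bound $w^2\bra{x}^{2M}\lesssim w\bra{x}^{2M+1}$ in each variable. This is dominated by the integrand of \eqref{eq:MeasDecayJostA} as soon as $2M+1\le N-1$.

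So the key step is to choose an admissible $M$ with $\frac1{2\alpha-1}<M\le\frac{N-2}{2}$. The remaining (remainder) terms supported in $\mu\in[1,2]$ carry fewer powers of the weights and are handled identically. Assumption \textbf{(B)} is treated in the same way: \eqref{eq:JostEst2} removes the factors $w$, the regularity of $\Theta$ drops by one order, and the requirement becomes $2M\le N-2$, matching \eqref{eq:MeasDecayJostB}.

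\textbf{Main obstacle.} The difficulty is precisely this exponent bookkeeping. Assumption \textbf{(A)} only guarantees $N>2+\frac1{2\alpha-1}$, whereas the halving argument needs $N-2\ge 2M_0$, where $M_0$ is the least admissible $M$.

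First fix to try: a sharper high energy bound obtained by splitting the $\mu$-integral at a height depending on $\bra{x}\bra{y}$. Below that height I would use the trivial bound with no integrations by parts; above it I would integrate by parts $M$ times. Optimising the split yields a fractional power $(\bra{x}\bra{y})^{\theta}$ with $\theta$ strictly between $0$ and $M$. I would then check whether $2\theta+1\le N-1$ can always be arranged under \textbf{(A)}, respectively $2\theta\le N-2$ under \textbf{(B)}.

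Fallback: if this still falls short near the threshold, I would invoke the first-power integrability condition together with the finite measure of $E,F$ through a Schur-type test in place of the Hilbert--Schmidt norm. That would give boundedness, and compactness would then be deduced by truncating $E,F$ to bounded sets, where the truncated operators are Hilbert--Schmidt, and passing to the norm limit.
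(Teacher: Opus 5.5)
Your reduction to an $L^2$ bound on the kernel via Lemma \ref{lem:StoneJost} and your low-energy estimate are fine. There is, however, a genuine gap: the proof stops exactly at the step you flag, and neither remedy closes it.

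\textbf{The split-height fix.} Carry out the optimisation with $X=\bra{x}\bra{y}$. The range $1\le\mu\le\Lambda$ contributes $\lesssim w(x)w(y)\int_1^\Lambda\mu^{\frac{1}{2\alpha}-1}\,d\mu\sim w(x)w(y)\Lambda^{\frac{1}{2\alpha}}$. After $M$ integrations by parts, the range $\mu\ge\Lambda$ contributes $\lesssim w(x)w(y)(X/|T|)^M\Lambda^{\frac{1}{2\alpha}-M\frac{2\alpha-1}{2\alpha}}$. Balancing at $\Lambda=(X/|T|)^{\frac{2\alpha}{2\alpha-1}}$ gives $\theta=\frac{1}{2\alpha-1}$ for every admissible $M$. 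This is the ceiling for any argument that integrates by parts in $\mu$ and then takes absolute values. Such an argument ignores the oscillation $e^{i\lambda(y-x)}$ of the Jost product, which is what cancels near the stationary point $\lambda_0\sim(|x-y|/|T|)^{\frac{1}{2\alpha-1}}$. Your requirement $2\theta+1\le N-1$ then reads $N\ge 2+\frac{2}{2\alpha-1}$, whereas \textbf{(A)} only gives $N>2+\frac{1}{2\alpha-1}$. For $\alpha=\frac34$, $N=5$ is admissible but you would need $N\ge6$. The same failure occurs under \textbf{(B)}, e.g.\ $\alpha=\frac35$, $N=9$.

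\textbf{The fallback.} It fails too. $\one{F}e^{-iTh^{\alpha}}\one{E}$ is a contraction anyway, so a Schur test adds nothing. Passing to the norm limit of truncations needs $\|\one{F\setminus[-R,R]}e^{-iTh^{\alpha}}\one{E}\|\to0$, which strong convergence does not give. With a majorant of product form $a(x)b(y)$, which is all that Proposition \ref{prop:FourBoundJost} or your refinement provides, the best bound obtainable on that tail is $\|a\|_{L^2(F\setminus[-R,R])}\|b\|_{L^2(E)}$. So the squared weight reappears.

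\textbf{Comparison with the paper.} The paper's proof is precisely the direct combination you declined. It asserts that \eqref{eq:FourEstWneq0} with \eqref{eq:MeasDecayJostA} (resp.\ \eqref{eq:FourEstWeq0} with \eqref{eq:MeasDecayJostB}) puts the kernel in $L^2(\mathbb{R}\times\mathbb{R})$. Your objection to that step is sound. It needs $\int_F w(x)^2\bra{x}^{2(N-1)}\,dx<\infty$ and its analogue on $E$, which the first-power conditions do not imply; take e.g.\ $F=\bigcup_{k\ge1}[k,k+k^{-N-3/2}]$. So, as written, the paper's argument proves the Hilbert--Schmidt property only under squared-weight versions of the hypotheses. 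That suffices for Theorem \ref{the:PotFracDynAB}, where $E,F$ are compact.

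\textbf{What would complete your argument.} You must do one of two things, and neither is in your proposal.
\begin{enumerate}
\item Adopt the squared hypotheses. Your argument with $M=N-1$ then coincides with the paper's.
\item Supply a genuinely new kernel estimate that exploits the cancellation near $\lambda_0$, e.g.\ a van der Corput bound in $\lambda=\mu^{1/(2\alpha)}$ as in Proposition \ref{pro:estoscilint}, with control of $\partial_\lambda$ of the Jost amplitudes.
\end{enumerate}
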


\begin{proof}
By Lemma \ref{lem:StoneJost}, the localized propagator has
integral kernel given by the sum of the two kernels in
\eqref{eq:StoneJostKernels}. Under Assumption \textbf{(A)}, \eqref{eq:FourEstWneq0} implies that this kernel belongs to
$L^2(\mathbb{R}\times\mathbb{R})$ whenever
\eqref{eq:MeasDecayJostA} holds. Under Assumption \textbf{(B)},
the same conclusion follows from \eqref{eq:FourEstWeq0}, and \eqref{eq:MeasDecayJostB}. Hence
$\one{F}e^{-iTh^{\alpha}}\one{E}$ is Hilbert--Schmidt in both cases.
\end{proof}

\section{Proof of the main results}
\label{sec:proomainres}

\begin{proof}[Proof of Theorem \ref{the:FracDynAB}]
Let us set $H = (- \Delta )^{\alpha}$,
\begin{equation}
  \label{eq:opDef}
  S = e^{i T H} \one{F} e^{-i T H} \one{E}
\end{equation}
By Proposition \ref{prop:compactCriterion}, inequality
\eqref{eq:AmBerIneq} is a consequence of
\begin{equation}
  \label{eq:opNormIneq}
  \lVert S \rVert < 1.
\end{equation}
In turn, since the left-hand term in \eqref{eq:opNormIneq} is
trivially $\leq 1$, the statement is proved by contradiction,
supposing that equality holds in \eqref{eq:opNormIneq}.

First, we can apply Theorem \ref{the:FracLapComp} to deduce that
$S$ is a compact operator. By Proposition
\ref{prop:compactCriterion}, compactness yields the existence of
a $1$-eigenfunction $f \in L^2$ for $S$.

Now, define inductively $E_0 = E$, $E_k = E_{k-1} \cup (E_{k-1} + z_{k-1}) $ for $1 \leq k \in \mathbb{N}$. The sequence $z_k$ is $\mathbb{R}^n$-valued, and can be chosen so that the following properties hold:
\begin{enumerate}
\item  \textbf{Property} \textbf{(1)}: one has that
\begin{equation*}
|E_{k-1}| \leq |E_k| \leq |E_{k-1}| + 2^{-k}.  
\end{equation*}
In particular, the set $E_{\infty} = \bigcup_{k=0}^{\infty} E_{k}$ still enjoys finiteness of the Lebesgue measure.
\item \textbf{Property} \textbf{(2)}: if $E$, $F$ satisfy assumption \eqref{eq:MeasDecay2}, then \eqref{eq:MeasDecay2} holds also replacing $E$ with $E_{\infty}$.
\end{enumerate}

We show how to choose an appropriate sequence $z_k$. First, define 
\begin{equation}
\label{eq:intDef}
I_k = \int_{\mathbb{R}^n \times \mathbb{R}^n} \one{F}(x)|x-y|^{2 \gamma} \one{E_k}(y) dx dy,
\end{equation}
\begin{align}
R_k = & \int_{\mathbb{R}^n \times \mathbb{R}^n} \one{F}(x)|x-y|^{2 \gamma} \one{E_{k-1}+z_{k-1} }(y) dx dy \label{eq:intRemaind1} \\
& - \int_{\mathbb{R}^n \times \mathbb{R}^n} \one{F}(x)|x-y|^{2 \gamma} \one{(E_{k-1}+z_{k-1} ) \cap E_{k-1}}(y) dx dy. \label{eq:intRemaind2}
\end{align}
We proceed inductively. $I_0$ is finite by assumption \eqref{eq:MeasDecay2}. 
When $k \geq 1$, make the inductive assumption that $I_{k-1}$ is finite. Clearly
\begin{equation}
\label{eq:intSum}
I_k = I_{k-1} +  R_k.
\end{equation}
By Lebesgue dominated convergence theorem, both integrals in \eqref{eq:intRemaind1} and \eqref{eq:intRemaind2} converge to $I_{k-1}$ when $z_{k-1}$ approaches $0$. Thus, choosing $z_{k-1}$ close enough to zero,
\begin{align*}
\label{eq:intRemBound}
|R_{k}| \leq 2^{-k}.
\end{align*}
Up to choosing $z_{k-1}$ smaller, $L^1$ continuity of translations also ensures us that \textbf{Property} \textbf{(1)} is satisfied.
\textbf{Property} \textbf{(2)} now follows from
\begin{equation*}
I_k \leq I_0 + \sum_{j=0}^{k-1}2^{-j},
\end{equation*}
letting $k \rightarrow \infty$, by monotone convergence. 

By a similar argument, we can extract a subsequence of $z_k$ (which for simplicity we still denote $z_k$) and repeat our construction of sets $F_k$, $F_{\infty}$ such that \textbf{Property} \textbf{(1)} holds substituting $E_k$ (resp. $E_{k-1}$, $E_{\infty}$) with $F_k$ (resp. $F_{k-1}$, $F_{\infty}$), and \eqref{eq:MeasDecay2} remains true replacing $E$ with $E_{\infty}$ and $F$ with $F_{\infty}$.

Now, replacing $E$ and $F$ with $E_{\infty}$ and $F_{\infty}$ also in the definition of the operator \eqref{eq:opDef}, we still get a compact operator.
Since by construction each element of the family of linearly independent functions $\{ f(\cdot - z_k) \}_{k \geq 1}$  is a $1$-eigenfunction for this new operator,
we reach the contradiction.
\end{proof}

\begin{proof}[Proof of Theorem \ref{the:PotFracDynAB}]
Set
\begin{equation*}
  S=\one{F}e^{-iTh^{\alpha}}\one{E}.
\end{equation*}
As in the proof of Theorem \ref{the:FracDynAB}, the estimate
\eqref{eq:AmBerIneq} follows once we know that $\lVert
S\rVert<1$ on $L^2(\mathbb{R})$. Since $E$ and $F$ are compact,
they satisfy \eqref{eq:MeasDecayJostA} and
\eqref{eq:MeasDecayJostB}; hence Theorem \ref{the:1dComp} gives
compactness of $S$ under either threshold assumption.

Assume by contradiction that $\lVert S\rVert=1$. Compactness
gives a maximizer $f\neq0$. Equality in the chain of
contractions defining $S$ then implies
\begin{equation*}
  f\in L^2(\mathbb{R}), \qquad
  \operatorname{spt} f\subset E, \qquad
  \operatorname{spt} e^{-iTh^{\alpha}}f\subset F.
\end{equation*}
At this point one repeats the localization argument of
\cite{DAnconaFiorletta26-a}: applying a short family of
finite-propagation wave operators associated with $h+c_0$, for
$c_0$ sufficiently large, to the extremizer, one constructs a
nonzero compactly supported solution of
\begin{equation*}
  (-\partial_x^2+V)g=cg
\end{equation*}
for some real $c$. This is impossible by one dimensional unique
continuation for the second order equation
with $V\in L^1_{\operatorname{loc}}$,
see e.g.~Theorem 7.1 in \cite{SchechterSimon80-a}.
\end{proof}

\begin{proof}[Proof of Theorem \ref{the:hiOrdAB}]
Let us set $H = (-\Delta)^m + V$, and define $S$ as in
\eqref{eq:opDef}. The proof follows the same lines as the
previous ones.

Thus we start by showing that the operator
$S$ is compact. We recall that, under our assumptions,
the spectral decomposition
\begin{equation*}
  L^2 = L^2_p + L^2_{ac}
\end{equation*}
holds, where $L^2_p$ is the subspace of $L^2$ associated to the
pure point part of the spectrum, while $L^2_{ac}$ the one
associated to the absolutely continuous part of the spectrum. We
denote by $P_p$ the projection operator over $L^2_p$, $P_{ac}$
the projection operator over $L^2_{ac}$. We can now split the
localized propagator in the sum of two terms.
\begin{equation}
  \label{eq:locPropSplit}
  \one{F} e^{- i T H} \one{E}
  =
  \one{F} e^{- i T H} P_{p} \one{E}
  +
  \one{F} e^{- i T H} P_{ac} \one{E}.
\end{equation}
We now show that each of the two terms in the right hand side of
\eqref{eq:locPropSplit} is compact, in order to conclude that
their sum is also compact, by standard properties of compact
operators. This in turn yields the compactness of $S$.

Noticing that $P_{p}$ maps $L^2(\mathbb{R}^n)$ into the Sobolev space $H^{2m}(\mathbb{R}^n)$ boundedly 
(see \cite{ConstantinSaut89-a})
and that $H^{2m}(\mathbb{R}^n)$ is compactly embedded in $L^2(\mathbb{R}^n)$, $P_{p}$ is compact as an operator acting on $L^2$.  
Thus the first term in \eqref{eq:locPropSplit} is compact.

For the second one, we will exploit the fact that $S$ factorizes through $L^{\infty}$. Indeed, by \cite{ErdoganGreen23-a}, the following dispersive estimate holds:
\begin{equation}
\label{eq:dispEstSch}
\lVert e^{- i T H} P_{ac} \rVert_{1 \rightarrow \infty} \lesssim T^{-\frac{n}{2m}}.
\end{equation}
Therefore $\one{F} e^{- i T H} P_{ac} \one{E}$ maps $L^2(\mathbb{R}^n)$ into $L^2(F) \cap L^{\infty}$ continuously. We can thus invoke Theorem 4.6.1 in 
\cite{Arendt06-a} with $X=L^2(\mathbb{R}^n)$, $q=2$, $\Omega = F$,
and the trivial fact that $L^2(F)$ is continuously embedded in $L^2(\mathbb{R}^n)$, to conclude that $\one{F} e^{- i T H} P_{ac} \one{E}$ is compact.

As before, we now assume by contradiction that $\|S\|=1$.
The usual abstract procedure allows us to construct a nonzero
$1$-eigenfunction $f$ for the operator $S$, i.e. $S f = f$.
Notice that $f$ is a $1$-eigenfunction of $S$ if and only if
$\operatorname{supp} f \subset E$, $\operatorname{supp} e^{- i T
H} f \subset F$.

We now want to apply powers of the (unbounded) operator $H$ to
$f$, and show that each $H^k f$, for $k \in \mathbb{N}$, is
in $L^{2}$ and is again a $1$-eigenfunction for $S$. 
We shall prove this for $k=1$; the cases $k>1$ follows easily
by induction.

Consider the operator
\begin{equation*}
  He^{-iTH}= He^{-iTH}P_{p}+ He^{-iTH}P_{ac}.
\end{equation*}
One has obviously 
\begin{equation*}
  He^{-iTH}P_{p}:L^{2}\to L^{2}.
\end{equation*}
For the second term, we recall that under our assumptions 
the wave operators $W_{\pm}$ intertwining
$HP_{ac}$ with $(-\Delta)^{m}$ are well defined and are bounded
on $L^{1}$ and on $L^{\infty}$ by the results of
\cite{ErdoganGreen23-a}. Now write
\begin{equation*}
  H e^{- i T H} P_{ac}
  =
  W_{+} (- \Delta)^m e^{- i T (- \Delta)^m}
  W_{+}^{*}
\end{equation*}
and note that, by \cite{Balabane89-a} with 
$P(D) = |D|^{2m}$, $r = 2m$, the operator
$(- \Delta)^m e^{- i T (- \Delta)^m}$ extends to a bounded
operator from $L^{1}$ to $L^{\infty}$. This implies
\begin{equation*}
  He^{-iTH}P_{ac}:L^{1}\to L^{\infty}.
\end{equation*}
Summing up, we have proved that
\begin{equation*}
  He^{-iTH}:L^{1}\cap L^{2}\to L^{\infty}+L^{2}
\end{equation*}
is a bounded operator. 

Now, since $\spt f \subset E$, we have $f\in L^{1}\cap L^{2}$
and hence $He^{-iTH}f\in L^{2}+L^{\infty}$.
Moreover, $e^{-iTH}f$ is supported in $F$, hence
the same is true of $He^{-iTH}f$, which implies that
$He^{-iTH}f=e^{-iTH}Hf\in L^{2}$ with support in $F$.
Therefore, $Hf\in L^{2}$ and since $f$ has support in $E$
the same holds for $Hf$. Summing up, we have proved that
$Hf\in L^{2}$, $\spt Hf \subset E$, $\spt e^{-iTH}Hf \subset F$,
so that $Hf$ is also 1-eigenfunction of $S$, as claimed.

By induction it follows that
$\{ H^k f \}_{k \in \mathbb{N}}$ is a family of
$1$-eigenfunctions for $S$. Since $S$ is compact, we can find
a finite linear
combinations of such functions equal to zero. In other words,
$\exists N \in \mathbb{N}$ and $P$ a polynomial of degree
$N$ such that
\begin{equation*}
  P(H) f = 0
\end{equation*}
Thus, we can find $0 \neq g \in L^2$, $\operatorname{supp} g
\subset E$, $\operatorname{supp} e^{- i T H} g \subset F$, $c
\in \mathbb{R}$ such that $H g = c g$. 

This leads to a contradiction, thanks to known unique continuation properties. For instance, we can apply Theorem 3.1 in \cite{Laba88-a}, for $\mu = m$, $\Omega = \mathbb{R}^n$, noticing that our potential is in $L_{loc}^{n / 2m}(\Omega)$ thanks to \textbf{Assumption (H)}, and that $g$ is in $H^{2m,2}(\Omega) \subset H_{loc}^{2m,q}(\Omega)$ (where $q = 2n / (n + 2m ) < 2$) by construction.
\end{proof}

\printbibliography



%


\end{document}